\documentclass[11pt]{article}
\usepackage[utf8]{inputenc}
\usepackage{amsmath}
\usepackage{amsthm}
\usepackage{amssymb}
\usepackage{enumerate}
\usepackage{hyperref}
\usepackage{aliascnt}
\usepackage{graphicx}
\usepackage{subfig}
\usepackage{color}

\theoremstyle{plain}
\newtheorem{thm}{Theorem}[section]

\newaliascnt{lem}{thm}
\newtheorem{lem}[lem]{Lemma}
\aliascntresetthe{lem}

\newaliascnt{pro}{thm}
\newtheorem{pro}[pro]{Proposition}
\aliascntresetthe{pro}

\newaliascnt{cor}{thm}
\newtheorem{cor}[cor]{Corollary}
\aliascntresetthe{cor}  

\newaliascnt{que}{thm}

\aliascntresetthe{que}

\newaliascnt{clm}{thm}
\newtheorem{clm}[clm]{Claim}
\aliascntresetthe{clm}

\newaliascnt{obs}{thm}
\newtheorem{obs}[obs]{Observation}
\aliascntresetthe{obs}

\theoremstyle{definition}
\newaliascnt{exm}{thm}

\aliascntresetthe{exm}

\theoremstyle{plain}

\newcommand{\N}{\mathbb{N}}
\newcommand{\R}{\mathbb{R}}

\newcommand{\I}{\mathcal{I}}
\newcommand{\B}{\mathcal{B}}
\newcommand{\C}{\mathcal{C}}
\renewcommand{\P}{\mathcal{P}}

\DeclareMathOperator{\rank}{rk}
\DeclareMathOperator{\fin}{fin}

\begin{document}

\title{\scshape Infinite matroid union}
\author{Elad Aigner-Horev\footnote{Research supported by the Minerva foundation.} \and Johannes Carmesin \and Jan-Oliver Fröhlich}
\date{University of Hamburg\\9 July 2012}
\maketitle

\begin{abstract}
\noindent
We consider the problem of determining whether the union of two infinite matroids is a matroid.
We introduce a superclass of the finitary matroids, the \emph{nearly finitary matroids}, and prove that the union of two nearly finitary matroids is a nearly finitary matroid.

On the other hand, we prove that the union of two arbitrary infinite matroids is not necessarily a matroid.
Indeed, we show (under a weak additional assumption) that the nearly finitary matroids are essentially the largest class of matroids for which one can have a union theorem. 

We then extend the base packing theorem for finite matroids to finite families of co-finitary matroids.
This, in turn, yields a matroidal proof for the tree-packing results for infinite graphs due to Diestel and Tutte.
\end{abstract}

\section{Introduction}

Recently, Bruhn, Diestel, Kriesell, Pendavingh and Wollan~\cite{matroid_axioms} 
found axioms for infinite matroids in terms of independent sets, bases, circuits, closure and
(relative) rank. These axioms allow for duality of infinite matroids as known
from finite matroid theory, which settled an old problem of Rado. 
With these new axioms it is possible now to look which theorems of finite
matroid theory have infinite analogues.

Here, we shall look at the {\sl matroid union theorem} which is 
a classical result in finite matroid theory ~\cite{Oxley,schrijverBook}.
It says that, given finite matroids $M_1 = (E_1,\I_1)$ and $M_2=(E_2,\I_2)$, 
the set system 
\begin{equation}\label{eq:what-is-union}
\I(M_1\vee M_2) = \{I_1\cup I_2\mid I_1 \in \I_1,\; I_2 \in \I_2\}
\end{equation}
is the set of independent sets of a matroid, the \emph{union matroid} $M_1 \vee M_2$, and specifies a rank function for this matroid.

The matroid union theorem has important applications in finite matroid theory.
For example, it can be used to provide short proofs for the {\sl base covering and packing theorem} (discussed below more broadly), or to the \emph{matroid intersection} theorem~\cite{Oxley}. 

While the union of two finite matroids is always a matroid, it is not true 
that the union of two infinite matroids is always a matroid (see \autoref{exm:fin_union_fail2} below). The purpose of this paper is to study for which matroids their union is a matroid.

\subsection{Our results} In this section, we outline our results with minimal background, deferring details until later sections. 
First we prove the following.

\begin{pro}\label{exm:fin_union_fail2}
If $M$ and $N$ are infinite matroids, then $\I(M_1 \vee M_2)$ is not necessarily a matroid. 
\end{pro}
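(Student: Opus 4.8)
The plan is to construct two infinite matroids $M_1=(E,\I_1)$ and $M_2=(E,\I_2)$ on a common countably infinite ground set and to check that $\I(M_1\vee M_2)$ fails one of the matroid axioms — in all likelihood the maximality axiom, which for the union would say that for every $I\in\I(M_1\vee M_2)$ and every $X$ with $I\subseteq X\subseteq E$ the family $\{J\in\I(M_1\vee M_2):I\subseteq J\subseteq X\}$ has a maximal element. The aim is to arrange that this fails already for $I=\emptyset$ and $X=E$; that is, $M_1\vee M_2$ has no base. Since a compactness argument shows that the union of two \emph{finitary} matroids is again a (finitary) matroid, the construction must involve a non‑finitary matroid; accordingly I would take $M_1$ finitary of infinite rank and $M_2$ co‑finitary (a dual of a finitary matroid), assembled from standard operations — duals, direct sums, restrictions of well‑understood matroids — so that the axioms for $M_1$ and $M_2$ themselves are immediate, and with the ``defects'' of $M_1$ and $M_2$ interlocking along infinitely many coordinates.

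The steps, in order. (1) Fix the ground set and define $M_1,M_2$ explicitly, verifying each is a matroid. (2) Show $\I(M_1\vee M_2)$ contains arbitrarily large independent sets: for every finite $F\subseteq E$ exhibit a splitting $F=F_1\cup F_2$ with $F_i\in\I_i$, giving in particular an increasing $\omega$‑chain of independent sets of the union. (3) — the crux — show that no $J\in\I(M_1\vee M_2)$ is maximal: given a witnessing decomposition $J=J_1\cup J_2$ with $J_i\in\I_i$, run an augmenting argument producing $x\notin J$ together with a re‑decomposition of $J+x$ into an $\I_1$‑part and an $\I_2$‑part. The mechanism is that a truly maximal decomposition would force an entire infinite circuit of $M_1$ or of $M_2$ to sit inside the corresponding part; as $M_1$ is finitary this is impossible on its side, while the co‑finitary side of $M_2$ is built so that it can never absorb the infinite set $E$ and hence always leaves elements that the other part fails to span. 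Steps (2) and (3) together show that $\{J\in\I(M_1\vee M_2):J\subseteq E\}$ has no maximal element, so $\I(M_1\vee M_2)$ is not a matroid.

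The main obstacle is step (3). It is not enough to analyse one natural way of distributing a candidate set between $M_1$ and $M_2$; one has to control \emph{all} such distributions, which is why an augmenting‑path style argument is unavoidable. The delicate point is to make precise that the finitary/co‑finitary interaction simultaneously (i) keeps every \emph{finite} configuration extendable and (ii) prevents the two parts from ever \emph{jointly} saturating $E$ — everything else, namely verifying the axioms for $M_1$ and $M_2$ and the observation that unions of finitary matroids are matroids, is routine.
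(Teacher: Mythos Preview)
What you have written is a plan, not a proof. The entire content of this proposition is the explicit construction of $M_1$ and $M_2$ and the verification that their union violates (IM); you have announced that step~(1) will ``fix the ground set and define $M_1,M_2$ explicitly'' but then never do so. Everything that follows (steps (2) and (3)) is therefore unverifiable. In particular, your step~(3) heuristic --- ``a truly maximal decomposition would force an entire infinite circuit of $M_1$ or of $M_2$ to sit inside the corresponding part; as $M_1$ is finitary this is impossible on its side'' --- is not an argument: a finitary matroid has no infinite circuits at all, so the sentence as written is vacuous, and in any case the difficulty is not to rule out \emph{one} decomposition of a candidate $J$ but to rule out \emph{all} of them simultaneously. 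An ``augmenting-path style argument'' will not do this for you without a concrete construction to anchor it.

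For comparison, the paper proceeds by giving the matroids explicitly and then \emph{characterising} membership in $\I(M\vee N)$, from which non-maximality is immediate. In the simpler (uncountable) version one takes $E=\N\times\R$, lets $M=\bigoplus_{n\in\N}U_{1,\{n\}\times\R}$ (finitary) and lets $N$ be the direct sum over $r\in\R$ of the single-circuit matroid on $\N\times\{r\}$ (co-finitary); one then shows that $J\in\I(M\vee N)$ if and only if $J$ contains at most countably many of the $N$-circuits, and this family visibly has no maximal element. In the countable version the construction is more delicate, and the paper shows (IM) fails not at $I=\emptyset$ but at $I=\N\times\N$: sets $J$ with $I\subseteq J\subseteq E$ lie in the union precisely when they miss infinitely many elements of an auxiliary countable set $L$. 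Note that your plan insists on a countable ground set \emph{and} on failure at $I=\emptyset$; the paper does not combine these, and you give no indication why your unspecified construction would. You need to actually write down $M_1$ and $M_2$ and prove a membership criterion for $\I(M_1\vee M_2)$; without that, there is no proof here.
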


One of the matroids involved in the proof of this proposition is finitary.  Nevertheless, in \autoref{sec:fin-mat},  we establish a union theorem (see \autoref{thm:nearly-union} below) for a superclass of the finitary matroids which we call {\sl nearly finitary} matroids, defined next.

For any matroid $M$, taking as circuits only the finite circuits of $M$ defines
a (finitary) matroid with the same ground set as $M$. This matroid is called the
\emph{finitarization} of $M$ and denoted by $M^{\fin}$.

It is not hard to show that every basis $B$ of $M$ extends to a
basis $B^{\fin}$ of $M^{\fin}$, 
and conversely every basis $B^{\fin}$ of $M^{\fin}$ contains a basis $B$ of $M$.
Whether or not $B^{\fin}\setminus B$ is finite will in general depend on the
choices for $B$ and $B^{\fin}$, but given a choice for one of the two, it will
no longer depend on the choice for the second one.

We call a matroid $M$ \emph{nearly finitary} if every base of its finitarization contains a base of $M$ such that their difference is finite. 

The class of nearly finitary matroids contains all finitary matroids, but not
only. For example, the set system $\C(M)
\cup \B(M)$ consisting of the circuits of an infinite-rank finitary matroid $M$
together with its bases forms the set of circuits of a nearly finitary matroid
that is not
finitary (see \autoref{thm:fin-to-nearly}).
In \cite{intersection} we characterize the graphic nearly finitary matroids;
this also gives rise to numerous examples of nearly finitary matroids that
are not finitary.

We show that the class of finitary matroids is closed under union (Section 4.2).
In \autoref{sec:nearly-union} we prove the same for the larger class of nearly
finitary
matroids, which is our main result:

\begin{thm}\label{thm:nearly-union}\emph{[Nearly finitary union theorem]}\\
If $M_1$ and $M_2$ are nearly finitary matroids, then $M_1 \vee M_2$ is a
matroid and in fact nearly finitary.
\end{thm}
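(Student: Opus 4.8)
The plan is to reduce the nearly-finitary case to the already-established finitary case by working with finitarizations, and then to control the "error terms" that are discarded when passing from a matroid to its finitarization. Concretely, write $M_i^{\fin}$ for the finitarization of $M_i$ ($i=1,2$); these are finitary, so by the finitary union theorem (Section 4.2) $M_1^{\fin}\vee M_2^{\fin}$ is a matroid, and it is obviously finitary. The first step is to show that $M_1\vee M_2$ and $M_1^{\fin}\vee M_2^{\fin}$ have the same ground set and that every independent set of $M_1\vee M_2$ is independent in $M_1^{\fin}\vee M_2^{\fin}$ — the latter is immediate since $\I_i\subseteq\I_i^{\fin}$. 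So $\I(M_1\vee M_2)\subseteq\I(M_1^{\fin}\vee M_2^{\fin})$, and the candidate union "matroid" sits inside a genuine finitary matroid.

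The heart of the argument is to produce, for each basis $B^{\fin}$ of $M_1^{\fin}\vee M_2^{\fin}$, a basis of $M_1\vee M_2$ contained in it whose complement in $B^{\fin}$ is finite; this will simultaneously show $M_1\vee M_2$ is a matroid (by verifying the base axioms, using that it is sandwiched between a matroid and its "finitary core") and that it is nearly finitary. Fix such a $B^{\fin}$; by the finitary union theorem it decomposes as $B^{\fin}=B_1^{\fin}\cup B_2^{\fin}$ with $B_i^{\fin}$ independent in $M_i^{\fin}$, and we may take the $B_i^{\fin}$ to be maximal such, i.e. bases of $M_i^{\fin}$ restricted to the relevant spanning set. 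Since each $M_i$ is nearly finitary, inside $B_i^{\fin}$ there is a basis $B_i$ of (the appropriate minor of) $M_i$ with $B_i^{\fin}\setminus B_i$ finite. Then $B:=B_1\cup B_2\subseteq B^{\fin}$ is independent in $M_1\vee M_2$, and $B^{\fin}\setminus B\subseteq (B_1^{\fin}\setminus B_1)\cup(B_2^{\fin}\setminus B_2)$ is finite. A short greedy/maximality argument then upgrades $B$ to a maximal element of $\I(M_1\vee M_2)$ still inside $B^{\fin}$ (only finitely much room to move), which is the desired basis.

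From here I would check the matroid base axioms for $\I(M_1\vee M_2)$. Nonemptiness and the fact that every independent set extends to a maximal one follow because $M_1^{\fin}\vee M_2^{\fin}$ is a matroid together with the "finite error" statement just proved: any independent $I$ of the union extends to a basis $B^{\fin}$ of the finitary union, and the construction above yields a basis of $M_1\vee M_2$ containing (a suitable extension of) $I$. The exchange axiom (B2) for infinite matroids — given bases $B,B'$ and $x\in B\setminus B'$, there is $y\in B'\setminus B$ with $B-x+y$ a base — I would verify by first performing the exchange in the finitary union $M_1^{\fin}\vee M_2^{\fin}$ (where it holds) and then correcting by the finite-difference argument, using that $B-x+y$ differs from an actual union-basis only finitely. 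The main obstacle I anticipate is precisely this bookkeeping: making sure that the choices of $B_i$ inside $B_i^{\fin}$ can be made coherently for the two bases $B,B'$ simultaneously, so that the finite corrections do not interfere, and confirming that the relevant minors of $M_i$ to which "nearly finitary" is applied are themselves nearly finitary (one needs that nearly finitary is preserved under taking the minors that arise when one matroid's spanning set is fixed and the other is restricted to the complement). Establishing that stability-under-minors lemma for nearly finitary matroids is likely the real technical core; the rest is a finite-error perturbation of the finitary union theorem.
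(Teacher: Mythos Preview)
Your high-level structure---reduce to the finitary union $M_1^{\fin}\vee M_2^{\fin}$ and show that $\I(M_1\vee M_2)$ sits inside it with only finite defects---is exactly what the paper does to prove that $\I(M_1\vee M_2)$ is nearly finitary (your decomposition $B^{\fin}=B_1^{\fin}\cup B_2^{\fin}$ and the passage to $B_i$ is essentially the paper's \autoref{thm:fin-equal} together with the last paragraph of its proof of \autoref{thm:nearly-union}). Your worry about stability of ``nearly finitary'' under minors is a red herring: the definition already applies to arbitrary $J\in\I(M^{\fin})$, not just bases, so no minor argument is needed.

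The genuine gap is in the matroid axioms. You propose to get (IM) by extending $I$ to a base $B^{\fin}$ of the finitary union and then descending to $B=B_1\cup B_2$; but nothing guarantees $I\subseteq B$, only that $I\setminus B$ is finite. Fixing this---swapping elements of $B\setminus I$ for elements of $I\setminus B$ while staying in $\I(M_1\vee M_2)$---is precisely what the paper's condition $(\ast)$ in \autoref{thm:nearlyfinitaryim} provides, and $(\ast)$ is \autoref{thm:b2*}, a consequence of the exchange-chain machinery of Section~4.1. Likewise your plan for (B2), ``do the exchange in $M_1^{\fin}\vee M_2^{\fin}$ and correct'', fails at the first step: bases of $M_1\vee M_2$ are merely independent in the finitary union, not bases there, so (B2) for the finitary union gives you nothing directly; and any correction step again needs an exchange property \emph{inside} $\I(M_1\vee M_2)$. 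The paper instead proves (I3) (hence (B2)) for the union of \emph{arbitrary} matroids via exchange chains (\autoref{thm:i3}, \autoref{thm:i3'}), and this---not minor-stability---is the real technical core you are missing.
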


\noindent \autoref{thm:nearly-union} is essentially best possible as follows.

First, the non-finitary matroid involved in the proof of
\autoref{exm:fin_union_fail2} is a countable direct sum of infinite circuits and
loops. This is essentially the simplest example of a matroid that is not nearly
finitary. 

Second,  we show in \autoref{sec:non-nearly} that for every matroid $N$ that is
not nearly finitary and that satisfies a (weak) additional assumption there
exists a finitary matroid $M$ such that $\I(M\vee N)$ is not a matroid. 
Thus in essence, not only is the class of nearly finitary matroids maximal with
the property of having a union theorem; it is not even possible to add a matroid
that is not nearly finitary to the class of finitary matroids without
invalidating matroid union.

More
precisely, we prove the following counterpart to \autoref{thm:nearly-union}. 

\begin{pro}\label{thm:tightness}
Let $N$ be a matroid that is not nearly finitary. 
Suppose that the finitarization of $N$ has an independent set $I$
containing only countably many $N$-circuits such that $I$ has no finite subset meeting all of these circuits.
Then there exists a finitary matroid $M$ such that $\I(M\vee N)$ is not a matroid.
\end{pro}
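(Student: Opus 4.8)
The plan is to reproduce inside $M\vee N$ the same obstruction to being a matroid that underlies \autoref{exm:fin_union_fail2}, using the circuits supplied by the hypothesis as raw material. Write $C_1,C_2,\dots$ for the $N$-circuits contained in $I$. Since $I\in\I(N^{\fin})$, none of these is finite; since no finite subset of $I$ meets all of them, there are infinitely many, and $\bigcap_n C_n=\emptyset$ (a common element would be a one-point hitting set). I first replace $N$ by its restriction to $E':=\bigcup_n C_n$: by the hypothesis on $I$ and the incomparability of circuits, the circuits of $N|E'$ are exactly $C_1,C_2,\dots$. If I produce a finitary matroid $M'$ on a ground set containing $E'$ with $\I(M'\vee(N|E'))$ not a matroid, then $M:=M'\oplus\operatorname{free}(E(N)\setminus E')$ is finitary, and the restriction of $\I(M\vee N)$ to $E(M')$ equals $\I(M'\vee(N|E'))$; since a restriction of a matroid is a matroid, $\I(M\vee N)$ is then not a matroid. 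So I may assume $E(N)=\bigcup_n C_n$ and $\C(N)=\{C_1,C_2,\dots\}$.

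Next I work on the ground set $E(N)\cup L$, where $L=\{\ell_1,\ell_2,\dots\}$ is a countable set of new elements (which act as loops of $N$ in the union). The matroid $M$ is a finitary matroid — built, say, as the cycle matroid of a suitably chosen graph on $E(N)\cup L$, after fixing a representative $x_n\in C_n$ and, if convenient, passing to a subfamily using that no finite set meets all $C_n$ — designed so that: $L$ is $M$-independent; the $\ell_n$ are ``wired'' to the $C_n$ so that no $M$-independent set can contain all of $L$ together with a set meeting every $C_n$; yet any finite subset of $L$ together with a set meeting only finitely many of the $C_n$ is $M$-independent. In $M\vee N$ this means the elements of $L$ must always be taken from the $M$-side, which can then cover a transversal of only finitely many $C_n$ at a time, so that $L\cup E'$ is \emph{not} independent in $M\vee N$ although all of its ``finite approximations'' are.

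To conclude that $\I(M\vee N)$ is not a matroid I would then exhibit a set $X$ (a suitable subset of $L\cup E'$) and an independent $I_0\subseteq X$ such that $\{J\in\I(M\vee N):I_0\subseteq J\subseteq X\}$ has no maximal member, contradicting the maximality axiom (IM): one builds an increasing chain of such $J$'s by enlarging, piece by piece, the intersections with the $C_n$ (each stage independent thanks to the finitary features of $M$ and $N$), while the wiring rules out any independent set lying above the whole chain. The delicate point — and the reason the definition of $M$ in the previous step must be chosen with care — is exactly this: producing a chain of independent sets with dependent union is \emph{not} enough, because in an infinite matroid the independent sets need not be closed under unions of chains (an infinite circuit already fails this), so one must genuinely check that the interval poset has no maximal element. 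This is why the obstruction has to involve infinitely many $C_n$ in a non-localizable way: an obstruction confined to finitely many $C_n$ would only put finitely many infinite circuits into $M\vee N$ and leave it a matroid. An essentially equivalent alternative is to arrange the same configuration so as to violate the augmentation axiom (I3), producing a basis of $M\vee N$ together with an independent set that cannot be enlarged towards it.
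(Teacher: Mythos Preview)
Your overall strategy matches the paper's: enumerate the infinite circuits $C_1,C_2,\ldots$ inside $I$, adjoin a countable set $L=\{\ell_1,\ell_2,\ldots\}$ of fresh elements, build a finitary $M$ that ``wires'' the $\ell_n$ to the $C_n$, and then violate (IM) on the interval between $I$ and $I\cup L$. Your preliminary reduction to $E(N)=\bigcup_n C_n$ with $\C(N)=\{C_1,C_2,\ldots\}$ is correct and slightly cleaner than what the paper does.

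However, the heart of the argument is missing: you never actually construct $M$, and the properties you list for it are not the right ones. In particular, your third requirement --- ``any finite subset of $L$ together with a set meeting only \emph{finitely many} of the $C_n$ is $M$-independent'' --- is useless for the argument. To make $I\cup F$ independent in $M\vee N$ for a finite $F\subseteq L$, the $M$-part must contain $F$ \emph{and} a set meeting \emph{every} $C_n$ (so that the remaining $N$-part misses all circuits). The dual requirement is equally important: after removing any \emph{infinite} $L'\subseteq L$, one still needs an $M$-independent set containing $L\setminus L'$ together with a transversal of all the $C_n$. Nothing in your description guarantees either of these, and ``the cycle matroid of a suitably chosen graph, after fixing a representative $x_n\in C_n$'' does not yield them.

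The paper's construction supplies exactly this. One first builds pairwise disjoint finite sets $Y_1,Y_2,\ldots\subseteq I$ with $|Y_n|\le n$ and $Y_n\cap C_i\neq\emptyset$ for all $i\le n$; this is possible because each $C_i$ is infinite, so one can avoid the finitely many previously used elements. Then $M:=\bigoplus_{n}U_{1,\,Y_n\cup\{\ell_n\}}$ is finitary. The point of the $Y_n$'s is that for any infinite set $\{\ell_{i_1},\ell_{i_2},\ldots\}$ of omitted labels (with $i_1<i_2<\cdots$), one can choose $d_n\in Y_{i_n}\cap C_n$ (using $i_n\ge n$), and $\{d_n:n\in\N\}\cup(L\setminus\{\ell_{i_n}\})$ is $M$-independent and hits every $C_n$. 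Conversely, if only finitely many $\ell_n$ are omitted, no $M$-independent set containing the remaining $\ell$'s can meet infinitely many $Y_n$, hence cannot meet all $C_n$. This yields the clean characterisation: a set $J$ with $I\subseteq J\subseteq I\cup L$ lies in $\I(M\vee N)$ if and only if $L\setminus J$ is infinite --- from which the failure of (IM) is immediate, without any delicate chain argument. This combinatorial step (the construction of the $Y_n$) is the missing idea in your proposal.
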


A simple consequence of \autoref{thm:nearly-union} is that $M_1 \vee \cdots \vee
M_k$ is a nearly finitary matroid whenever $M_1,\ldots,M_k$ are nearly finitary.
On the other hand, (by \autoref{exm:inf_union_fail}) a countable union of
nearly finitary matroids need not be a matroid.

In finite matroid theory, the {\sl base covering} and {\sl base packing} theorems are two well-known applications of the finite matroid union theorem. The former extends to finitary matroids in a straightforward manner (see \autoref{thm:cover}). 

In \autoref{sec:cover-pack}, we extend the finite base packing theorem to finite families of \emph{co-finitary} matroids; i.e., matroids whose dual is finitary. The finite base packing theorem asserts that {\sl a finite matroid $M$ admits $k$ disjoint bases if and only if $k \cdot \rank(X) +|E(M)\setminus X| \geq k \cdot \rank(M)$ for every $X \subseteq E(M)$}~\cite{schrijverBook}, where $\rank$ denotes the rank function of $M$. For infinite matroids, this rank condition is too crude.  We reformulate this condition using the notion of {\sl relative rank} introduced in~\cite{matroid_axioms} as follows: given two subsets $B \subseteq A \subseteq E(M)$, the \emph{relative rank of $A$ with respect to $B$} is denoted by $\rank(A|B)$, satisfies $\rank(A|B) \in \N \cup \{ \infty \}$, and is given by 
\[
\rank(A|B) = \max \{|I \setminus J|:\; J \subseteq I, \; I \in \I(M) \cap 2^A,\; J \text{ maximal in } \I(M) \cap 2^B\}.
\] 

\begin{thm}\label{thm:pack}
A co-finitary matroid $M$ with ground set $E$ admits $k$ disjoint bases if and only if $|Y|\geq k \cdot \rank(E|E-Y)$ for all finite sets $Y\subseteq E$.
\end{thm}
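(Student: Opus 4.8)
\emph{Necessity.}
Suppose $B_1,\dots,B_k$ are pairwise disjoint bases of $M$ and let $Y\subseteq E$ be finite. Fix $i$ and put $A_i:=B_i\cup(E\setminus Y)$, so that $E\setminus Y\subseteq A_i\subseteq E$ and $A_i\setminus(E\setminus Y)=B_i\cap Y$ is finite. Since $B_i$ is a basis it spans $M$, hence so does the larger set $A_i$, and therefore $\rank(E|A_i)=0$. Using additivity of relative rank along the chain $E\setminus Y\subseteq A_i\subseteq E$ we obtain
\[
\rank(E|E\setminus Y)=\rank(E|A_i)+\rank(A_i|E\setminus Y)=\rank(A_i|E\setminus Y)\le\bigl|A_i\setminus(E\setminus Y)\bigr|=|B_i\cap Y|,
\]
where the inequality holds because any pair $J\subseteq I$ witnessing $\rank(A_i|E\setminus Y)$ has $I\setminus J\subseteq A_i\setminus(E\setminus Y)$, by maximality of $J$ in $\I(M)\cap 2^{E\setminus Y}$. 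Summing over $i$ and using pairwise disjointness of the $B_i$ gives $k\cdot\rank(E|E\setminus Y)\le\sum_{i=1}^k|B_i\cap Y|\le|Y|$.

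\emph{Sufficiency.}
This is the substantial direction. For $k=1$ there is nothing to prove, so assume $k\ge2$; applying the hypothesis to singletons then shows $M$ has no coloop, and since loops lie in no basis we may delete them, so that every element of $E$ lies in some cocircuit. The plan is to reduce to the finite base packing theorem via compactness. For any set $S\subseteq E$ the contraction $M_S:=M/(E\setminus S)$ has ground set $S$, its cocircuits are exactly the cocircuits of $M$ contained in $S$, and $\rank_{M_S}(S|S\setminus Y)=\rank_M(E|E\setminus Y)$ for all $Y\subseteq S$. Hence for finite $S$ the matroid $M_S$ is finite and the hypothesis of the theorem restricted to subsets of $S$ is precisely the finite base packing condition for $M_S$, so $M_S$ has $k$ pairwise disjoint bases. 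Now recall the (purely matroidal, duality-based) criterion: a set $B$ is a basis of a matroid if and only if $B$ meets every cocircuit and, for each $e\in B$, some cocircuit meets $B$ only in $e$. Because $M$ is co-finitary its cocircuits are finite, so this criterion has finite character relative to an exhaustion $S_1\subseteq S_2\subseteq\cdots$ of $E$ by finite sets: it therefore suffices to choose, for each $n$, a $k$-tuple $(B_1^{(n)},\dots,B_k^{(n)})$ of pairwise disjoint bases of $M_{S_n}$ with $B_i^{(n)}=B_i^{(n+1)}\cap S_n$ for all $i,n$; then $B_i:=\bigcup_n B_i^{(n)}$ are pairwise disjoint, and each $B_i$ meets every cocircuit and isolates each of its elements in a cocircuit (every cocircuit lies in some $S_n$, where $B_i\cap S_n=B_i^{(n)}$ is a basis), hence is a basis of $M$.

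The main obstacle is exactly this coherent assembly. A $k$-tuple of disjoint bases of $M_{S_{n+1}}$ need not restrict, on contracting $S_{n+1}\setminus S_n$, to disjoint bases of $M_{S_n}$, and conversely a $k$-tuple of disjoint bases of $M_{S_n}$ need not lift to one of $M_{S_{n+1}}$ with the prescribed restriction, since that would require $k$ disjoint bases of $M_{S_{n+1}}\!\upharpoonright(S_{n+1}\setminus S_n)$, which the hypothesis does not supply. One therefore allows the finite solutions to be revised as $n$ grows while still forcing eventual stabilisation on each $S_n$: apply König's Infinity Lemma to the tree whose level-$n$ vertices are the finitely many $k$-tuples of disjoint bases of $M_{S_n}$, with edges given by a compatibility relation loose enough that every level-$(n{+}1)$ vertex has a predecessor, yet rigid enough that every branch stabilises on each $S_n$; a branch then yields the required coherent sequence. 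The co-finitary hypothesis is used precisely to make ``basis of $M$'' a condition of finite character, so that a stabilised limit of finite bases is again a basis. (For uncountable $E$ one replaces the exhaustion and König's Lemma by the corresponding transfinite bookkeeping; alternatively one can build the $B_i$ directly by transfinite recursion, using the rank hypothesis to show the construction never gets stuck, though ensuring the outputs are spanning requires comparable care.)
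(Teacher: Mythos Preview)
Your necessity argument is fine. In the sufficiency direction you have the right architecture---reduce to the finite contractions $M.Y=M/(E\setminus Y)$, invoke finite base packing there, and assemble by compactness---but there is a genuine gap at the assembly step. You yourself flag the obstacle and then appeal to K\"onig's Lemma on a tree whose edge relation you specify only by the properties you wish it had (``loose enough that every vertex has a predecessor, yet rigid enough that every branch stabilises''); no such relation is exhibited, and with \emph{bases} as the vertices there is no natural one. The reason is visible in your own basis criterion: ``$B$ meets every cocircuit'' is a closed condition (cocircuits are finite), but ``for each $e\in B$ some cocircuit meets $B$ only in $e$'' is only $G_\delta$, so $k$-tuples of disjoint bases do not form a compact set and do not restrict along $M_{S_{n+1}}\to M_{S_n}$.

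The missing idea---and this is exactly what the paper supplies---is to drop the independence half and work with \emph{spanning sets}, or equivalently to dualise. The paper observes that $M$ has $k$ disjoint bases iff $E$ lies in
\[
\I[M^*,k,k-1]=\bigl\{X:\exists\,I_1,\dots,I_k\in\I(M^*)\text{ with every }e\in X\text{ in at least }k-1\text{ of the }I_j\bigr\}.
\]
Since $M^*$ is finitary, the same argument as for \autoref{thm:top_union} shows $\I[M^*,k,k-1]$ is a finitary set system; hence it equals $\P(E)$ as soon as it contains every finite $Y$. For finite $Y$ this unwinds to asking for $k$ disjoint spanning sets of $M.Y$, and the hypothesis $|Z|\ge k\cdot\rank(E\,|\,E\setminus Z)$ together with $\rank_{M.Y}(Y\,|\,Y\setminus Z)=\rank(E\,|\,E\setminus Z)$ is precisely the finite base packing condition for $M.Y$. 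This replaces your undetermined K\"onig tree by a one-line compactness step and needs no separate treatment of the uncountable case.
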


\autoref{thm:pack} does not extend to arbitrary infinite matroids.
Indeed, for every integer $k$ there exists a finitary matroid with no three disjoint bases and satisfying $|Y|\geq k \cdot \rank(E|E-Y)$ for every $Y \subseteq E$~\cite{aharoniThom,DiestelBook10}.

This theorem gives a short matroidal proof of a result of Diestel and Tutte~\cite[Theorem 8.5.7]{DiestelBook10} who showed that the well-known tree-packing theorem for finite graphs due to Nash-Williams and Tutte \cite{DiestelBook10} extends to infinite graphs with so-called topological spanning trees.

\section{Preliminaries}\label{sec:pre}

Notation and terminology for graphs are that of~\cite{DiestelBook10}, for matroids  that of~\cite{Oxley,matroid_axioms}, and for topology that of~\cite{Armstrong}.

Throughout, $G$ always denotes a graph where $V(G)$ and $E(G)$ denote its vertex and edge sets, respectively. 
We write $M$ to denote a matroid and write $E(M)$, $\I(M)$, $\B(M)$, and $\C(M)$ to denote its ground set, independent sets, bases, and circuits, respectively. 

It will be convenient to have a similar notation for set systems. That is, for a set system $\I$ over some ground set $E$, an element of $\I$ is called \emph{independent}, a maximal element of $\I$ is called a \emph{base} of $\I$, and a minimal element of $\P(E)\setminus \I$ is called \emph{circuit} of $\I$. A set system is \emph{finitary} if an infinite set belongs to the system provided each of its finite subsets does; with this terminology, $M$ is finitary provided that $\I(M)$ is finitary.

We review the definition of a matroid as given in~\cite{matroid_axioms}.
A set system $\I$ is the set of independent sets of a matroid if it satisfies the following \emph{independence axioms}~\cite{matroid_axioms}:
\begin{itemize}
	\item[(I1)] $\emptyset\in \I$.
	\item[(I2)] $\left\lceil \I \right\rceil=\I$, that is, $\I$ is closed under taking subsets.
	\item[(I3)] Whenever $I,I'\in \I$ with $I'$ maximal and $I$ not maximal, there exists an $x\in I'\setminus I$ such that $I+x\in \I$.
	\item[(IM)] Whenever $I\subseteq X\subseteq E$ and $I\in\I$, the set $\{I'\in\I\mid I\subseteq I'\subseteq X\}$ has a maximal element.
\end{itemize}

In~\cite{matroid_axioms}, an equivalent axiom system to the independence axioms is provided and is called the \emph{circuit axioms system}; this axiom system characterises a matroid in terms of its circuits. Of these circuit axioms, we shall make frequent use of the so called \emph{(infinite) circuit elimination axiom} phrased here for a matroid $M$:
\begin{enumerate}
	\item [(C)] Whenever $X\subseteq C\in \C(M)$ and $\{C_x\mid x\in X\} \subseteq \C(M)$ satisfies $x\in C_y\Leftrightarrow x=y$ for all $x,y\in X$, then for every $z \in C\setminus \left( \bigcup_{x \in X} C_x\right)$ there exists a  $C'\in \C(M)$ such that $z\in C'\subseteq \left(C\cup  \bigcup_{x \in X} C_x\right) \setminus X$.
\end{enumerate}

\section{Union of arbitrary infinite matroids}\label{sec:union} 

In this section, we prove \autoref{exm:fin_union_fail2}. 
That is, we show that there exists infinite matroids $M$ and $N$ whose union is not a matroid. 

As the nature of $M$ and $N$ is crucial for establishing the tightness of \autoref{thm:nearly-union}, we prove \autoref{exm:fin_union_fail2} in two steps as follows. 

In \autoref{thm:uncnt-fail}, we treat the relatively simpler case in which $M$ is finitary and $N$ is co-finitary and both have uncountable ground sets. Second, then, in \autoref{clm:cnt-fail}, we refine the argument as to have $M$ both finitary and co-finitary and $N$ co-finitary and both on countable ground sets. 

\begin{clm}\label{thm:uncnt-fail}
There exists a finitary matroid $M$ and a co-finitary matroid $N$ such that $\I(M\vee N)$ is not a matroid. 
\end{clm}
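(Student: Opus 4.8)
The plan is to construct $M$ and $N$ on a common uncountable ground set so that $\I(M \vee N)$ violates the maximality axiom (IM), which is the natural axiom to break since the other independence axioms are inherited by unions almost for free. Concretely, I would take the ground set $E$ to be the disjoint union of countably many ``blocks,'' where each block $E_i$ is an uncountable set, say of size $\aleph_1$. On each block, let $N$ restrict to a circuit (the uniform matroid $U_{|E_i|-1,|E_i|}$ in spirit, i.e.\ every proper subset independent, the whole block dependent), and take $N$ to be the direct sum of these ``co-circuits'' across blocks; this $N$ is co-finitary because the dual of a circuit is a cocircuit, i.e.\ a single element, so $N^*$ is a direct sum of loops-and-a-free-element structure, hence finitary. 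For $M$, I would put on each block $E_i$ a matroid of rank $i$ (for instance a partition-type or uniform matroid $U_{i,|E_i|}$ on $E_i$), and take $M$ to be the direct sum; since each piece is finitary (uniform matroids of finite rank are finitary) and direct sums of finitary matroids are finitary, $M$ is finitary.

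The next step is to analyze the independent sets of the union $M \vee N$ on a single block $E_i$: a set $S \subseteq E_i$ is independent in $M \vee N$ iff $S = I_M \cup I_N$ with $I_M$ independent in $M|_{E_i}$ (so $|I_M| \le i$) and $I_N$ independent in $N|_{E_i}$ (so $I_N$ is any proper subset of $E_i$, in particular $I_N \ne E_i$). Hence every proper subset of $E_i$ is independent in the union (take $I_N = S$), and $E_i$ itself is independent in the union iff we can write $E_i = I_M \cup I_N$ with $|I_M| \le i$ and $I_N \subsetneq E_i$ — which is possible exactly when $E_i \setminus I_N$ can be covered by $i$ elements, i.e.\ always, since $|E_i \setminus I_N| $ can be taken to be $1$. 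Wait — that would make $E_i$ independent, so I need to be more careful: the obstruction must come from combining blocks. The right setup is to make $M$ have \emph{finite total rank fails}, so instead let $M|_{E_i} = U_{k_i, |E_i|}$ with $k_i \to \infty$, and arrange the global bad set $X$ so that within $X$ the deficiency ``$|E_i \cap X| - $ (what $N$ can absorb) $-$ (what $M$ can absorb)'' forces, for any independent $I \subseteq X$ in the union, the existence of an extension — contradicting (IM) by exhibiting a set $X$ and an independent $I \subseteq X$ with no maximal independent set between them.

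More precisely, the key step — and the main obstacle — is to pin down a single pair $(I, X)$ with $I \subseteq X \subseteq E$, $I \in \I(M \vee N)$, such that the collection $\{I' \in \I(M\vee N) : I \subseteq I' \subseteq X\}$ has \emph{no} maximal element. The mechanism: choose $X$ meeting each block $E_i$ in a set $X_i$ with $|X_i|$ large but, crucially, so that $X_i$ is dependent in $N|_{E_i}$ (i.e.\ $X_i = E_i$, or more robustly redesign $N$ so the forbidden sets are the \emph{cofinite} subsets of $E_i$ — take $N|_{E_i}$ to be the matroid whose circuits are all cofinite sets of $E_i$ of a fixed co-size; this is still co-finitary). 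Then an independent set $I'$ in the union restricted to block $i$ must leave out enough of $X_i$ that $I' \cap X_i$ is $N$-independent on what's not covered by the $\le k_i$ elements of $I_M$; by letting the blocks interact through $I$ (which uses up a growing ``budget'' of the $M$-side across blocks), one engineers that any such $I'$ can always be enlarged in some later block, so no maximal one exists. I would then verify (I1)–(I3) for $\I(M \vee N)$ hold trivially or by the standard finite-union argument applied locally, so that (IM) is the unique failure, completing the proof; I'd also record that this $M$ is finitary and this $N$ is co-finitary, as claimed, and note both ground sets are uncountable by construction (each $|E_i| = \aleph_1$). The delicate point I expect to spend the most care on is choosing the block sizes, the ranks $k_i$, the co-sizes of $N$'s circuits, and the set $X$ simultaneously so that the ``no maximal element'' phenomenon genuinely occurs and is not accidentally repaired by absorbing slack in another block.
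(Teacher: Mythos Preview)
Your plan has a structural flaw that no choice of parameters will repair: you take both $M$ and $N$ to be direct sums over the \emph{same} block decomposition $E=\bigcup_i E_i$. Whenever $M=\bigoplus_i M_i$ and $N=\bigoplus_i N_i$ with respect to a common partition, one checks directly that $\I(M\vee N)=\{I: I\cap E_i\in\I(M_i\vee N_i)\text{ for all }i\}$, so $M\vee N$ is itself the direct sum of the blockwise unions. In your setup each $M_i\vee N_i$ is a perfectly good matroid (in fact, with $M_i=U_{k_i,E_i}$ and $N_i$ having circuits the sets of co-size $c_i-1$, the blockwise union is just the co-uniform matroid whose independent sets are those of co-size at least $c_i-k_i$), and a direct sum of matroids is a matroid. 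The sentence ``by letting the blocks interact through $I$'' is exactly where the argument breaks: there is no interaction, because the two matroids respect the same partition.

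The missing idea is that $M$ and $N$ must be sliced along \emph{transversal} partitions. The paper takes $E=\N\times\R$, lets $M$ be the direct sum over the countably many rows $\{n\}\times\R$ (each row a rank-$1$ uniform matroid, so $M$ is finitary), and lets $N$ be the direct sum over the uncountably many columns $\N\times\{r\}$ (each column a single infinite circuit, so $N$ is co-finitary). Now an $M$-independent set is countable (at most one point per row), while removing an $N$-independent set from any $J$ still leaves at least one point in every column of $J$. It follows that $J\in\I(M\vee N)$ if and only if $J$ contains at most countably many full columns; since there are uncountably many columns, every such $J$ can be extended by a further column, and (IM) fails already for $I=\emptyset$, $X=E$. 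The row/column orthogonality is what produces the failure, and it is precisely this feature your same-block decomposition cannot supply.
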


\begin{proof}
Set $E= E(M) = E(N) = \N \times\R$. Next,  
put $M := \bigoplus_{n\in\N} M_n$, where $M_n := U_{1, \{n\} \times \R}$. The matroid $M$ is finitary as it is a direct sum of $1$-uniform matroids. 
For $r \in \R$, let $N_r$ be the circuit matroid on $\N \times \{r\}$; set $N:= \bigoplus_{r\in\R}N_r$. As $N$ is a direct sum of circuits, it is co-finitary.
(see \autoref{fig:fin_union_fail1}).

\begin{figure} [htpb]   
\begin{center}
	\includegraphics{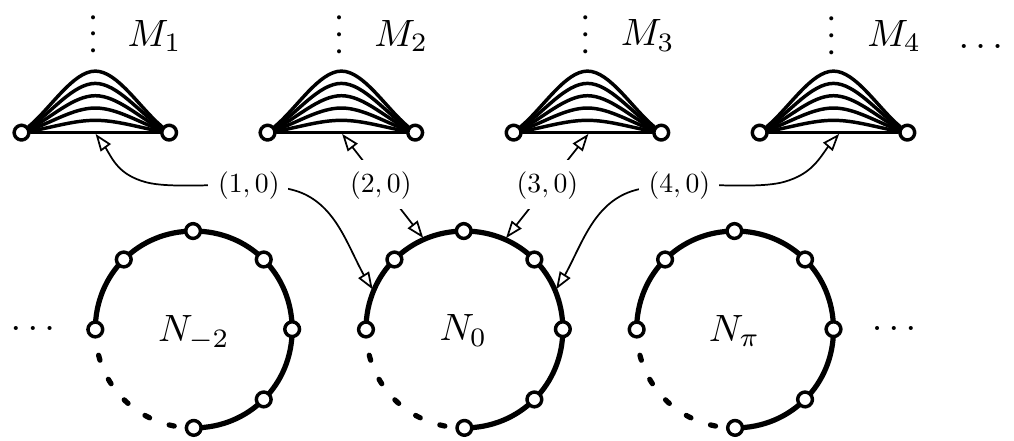}
	\caption{{\small $M=\bigoplus_{n\in\N} M_n$ and $N=\bigoplus_{r\in\R}N_r$.}}
	\label{fig:fin_union_fail1}
\end{center}
\end{figure}

We show that $\I(M\vee N)$ violates the axiom (IM) for $I=\emptyset$ and $X=E$; so that $\I(M \vee N)$ has no maximal elements. 
It is sufficient to show that 
a set $J\subseteq E$ belongs to $\I(M\vee N)$ if and only if it contains at most countably many circuits of $N$. For if so,
then for any $J\in \I(M\vee N)$ and any circuit $C = \N\times \{r\}$ of $N$ with $C\nsubseteq J$ (such a circuit exists) we have $J\cup C\in\I(M\vee N)$.

The point to observe here is that every independent set of $M$ is countable, (since
every such set meets at most one element of $M_n$ for each $n \in \N$), and~that every independent set of $N$ misses uncountably many elements of $E$ (as any such set must miss at least one element of $N_r$ for each $r \in \R$).

Suppose $J\subseteq E$ contains uncountably many circuits of $N$. 
Since each independent set of $N$ misses uncountably many elements of $E$, every set $D = J \setminus J_N$ is uncountable whenever
$J_N \in \I(J)$.  On the other hand, since each independent set of $M$ is countable, we have that $D \notin \I(M)$. 
Consequently, $J\notin \I(M\vee N)$, as required.

We may assume then that $J\subseteq E$ contains only countably many circuits of $N$, namely, $\{C_{r_1},C_{r_2},\ldots\}$. Now the set $J_M = \{(i,r_i): i \in \N\}$ is independent in $M$; consequently, $J \setminus J_M$ is independent in $N$; completing the proof.
\end{proof}

We proceed with matroids on countable ground sets. 

\begin{clm}\label{clm:cnt-fail}
There exist a matroid $M$ that is both finitary and co-finitray, and a co-finitary matroid $N$ whose common ground is countable such that $\I(M\vee N)$ is not a matroid. 
\end{clm}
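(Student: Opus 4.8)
The plan is to mimic the structure of \autoref{thm:uncnt-fail} but to compress the two "orthogonal" families of circuits onto a countable ground set, so that the cardinality arguments are replaced by finite-versus-cofinite (or bounded-versus-unbounded) arguments. Concretely, I would take a countable ground set $E$ and build $N$ again as a direct sum of infinite circuits, say $N=\bigoplus_{i\in\N}N_i$ where the $N_i$ are disjoint infinite circuits partitioning $E$; such an $N$ is co-finitary (duals of finite circuits are finite, hence $N^*$ is finitary) but visibly not nearly finitary, since its finitarization is free and a base of $N$ must omit one point from each $N_i$. For $M$ I would arrange a partition of $E$ into finite blocks, each block a small uniform matroid (so $M$ is finitary), chosen so that $M$ is also co-finitary — e.g.\ each block $\{n\}\times\{0,1,\dots\}$ truncated, or more simply $M=\bigoplus_j U_{1,B_j}$ with each $B_j$ finite, which makes $M^*=\bigoplus_j U_{|B_j|-1,B_j}$ finitary as well. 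The blocks of $M$ and the circuits of $N$ should be interleaved so that no $N$-circuit is covered by finitely many $M$-independent choices; the natural choice is a grid-like incidence where each $M$-block meets each $N_i$ in at most one point but the $N_i$ are long enough to force the obstruction.

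Once the two matroids are in place, the argument is exactly as before in outline: I would characterize which $J\subseteq E$ lie in $\I(M\vee N)$. A set $J$ is in $M\vee N$ iff it splits as $J_M\cup J_N$ with $J_M\in\I(M)$ and $J_N\in\I(N)$; since $\I(M)$ is "bounded" in the sense that an $M$-independent set meets each block $B_j$ in at most one point, and $\I(N)$ must omit at least one point of each $N_i$, one shows that $J\in\I(M\vee N)$ iff $J$ meets only finitely many of the $N_i$ in a full circuit — or, depending on the precise construction, iff some analogous finiteness condition holds. Then for $I=\emptyset$, $X=E$ the set $\{I'\in\I(M\vee N):I'\subseteq E\}$ has no maximal element, because given any such $J$ one can find an $N$-circuit $C=N_i\not\subseteq J$ and check $J\cup C$ still satisfies the finiteness condition, hence lies in $\I(M\vee N)$. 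This contradicts (IM), so $\I(M\vee N)$ is not a matroid.

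The main obstacle — and the reason this needs to be a separate claim rather than a corollary of \autoref{thm:uncnt-fail} — is the bookkeeping needed to make $M$ simultaneously finitary and co-finitary while keeping the incidence pattern between the $M$-blocks and the $N$-circuits rich enough to kill (IM) on a countable set. In the uncountable case one had uncountably many $N$-circuits and only countable $M$-independent sets, which did the separating for free; on a countable ground set one must instead engineer the finite blocks of $M$ and the infinite circuits of $N$ so that, on the one hand, any $M$-independent set can "cover" (be subtracted from) at most finitely much of any given $N$-circuit, and on the other hand the $N_i$ cannot all be trimmed to independence after deleting a single $M$-independent set. Getting a clean necessary-and-sufficient condition for membership in $\I(M\vee N)$ out of such a construction — so that the failure of (IM) is transparent — is the step I expect to require the most care; the verifications that $M$ is finitary and co-finitary and that $N$ is co-finitary are then routine, being immediate from the direct-sum structure and the finiteness of the building blocks.
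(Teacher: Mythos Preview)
Your plan has a real gap: with $N=\bigoplus_i N_i$ a disjoint union of infinite circuits partitioning $E$ and $M=\bigoplus_j U_{1,B_j}$ on finite blocks (possibly padded with loops), the union $M\vee N$ is in fact the \emph{free} matroid on $E$, so (IM) does not fail at $I=\emptyset$, $X=E$. The reason is that each $N_i$ is infinite while every $B_j$ is finite, so each $N_i$ meets infinitely many blocks; hence one can greedily pick a transversal $D=\{d_i:i\in\N\}$ with $d_i\in N_i$ and all $d_i$ lying in distinct blocks, giving $D\in\I(M)$. For any $J\subseteq E$ one then writes $J=(J\cap D)\cup(J\setminus D)$ with $J\cap D\in\I(M)$ and $J\setminus D\in\I(N)$. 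Your proposed characterization ``$J\in\I(M\vee N)$ iff $J$ contains only finitely many full $N_i$'' is therefore false, and no amount of interleaving the blocks with the rows will fix this as long as the $N_i$'s and the $B_j$'s live on the same set.

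What the paper does, and what your outline is missing, is to adjoin a fresh countable set $L=\{\ell_1,\ell_2,\dots\}$ disjoint from the grid, make every $\ell_n$ a \emph{loop of $N$}, and insert $\ell_n$ into the $n$th $M$-block (so $M_n=U_{1,(\{n\}\times\{1,\dots,n\})\cup\{\ell_n\}}$, with all remaining grid points loops of $M$). Now (IM) is violated not at $I=\emptyset$ but at $I=\N\times\N$, $X=(\N\times\N)\cup L$: a set $J$ with $I\subseteq J\subseteq X$ lies in $\I(M\vee N)$ iff it omits infinitely many elements of $L$. The mechanism is that each $\ell_n\in J$ is forced into $J_M$ (being an $N$-loop) and thereby exhausts the single slot of the block $M_n$; if only finitely many blocks remain free, they cannot supply a point from every one of the infinitely many $N$-circuits that $J_N$ must avoid. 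This competition between $L$ and the transversal for the capacity of the $M$-blocks is the missing ingredient.
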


\begin{proof}
For the common ground set we take $E = (\N\times\N)\cup L$ where $L= \{\ell_1, \ell_2,\ldots\}$ is countable and disjoint to $\N\times \N$.
The matroids $N$ and $M$ are defined as follows. 
For $r\in\N$, let $N_r$ be the circuit matroid on $\N\times\{r\}$. Set $N$ to be the matroid on $E$ obtained by adding the elements of $L$ to  the matroid $\bigoplus_{r\in\N}N_r$ as loops.
Next, 
for $n \in \N$,  let $M_n$ be the $1$-uniform matroid on $ (\{n\}\times\{1,2,\ldots,n\})\cup \{\ell_n\}$.
Let $M$ be the matroid obtained by adding to the matroid $\bigoplus_{n\in\N} M_n$ all the members of $E \setminus E(\bigoplus_{n\in\N} M_n)$ as loops 

We show that $\I(M\vee N)$ violates the axiom (IM) for $I=\N\times \N$ and $X=E$. It is sufficient to show that 

\begin{itemize}
\item [(a)] $I\in\I(M\vee N)$; and that
\item [(b)] every set $J$ satisfying  $I\subset J\subseteq E$ is in $\I(M\vee N)$ if and only if it misses infinitely many elements of $L$.
\end{itemize}

To see that $I\in\I(M\vee N)$, note that the set $I_M = \{(n, n)\ |\ n\in\N\}$ is independent in $M$ and meets each circuit $\N \times \{r\}$ of $N$.
In particular, the set $I_N := (\N\times\N)\setminus I_M$ is independent in $N$, and therefore $I = I_M\cup I_N\in\I(M\vee N)$.

Let then $J$ be a set satisfying $I \subseteq J \subseteq E$, and suppose, first, that $J\in \I(M\vee N)$. We show that $J$ misses infinitely many elements of $L$. 

There are sets $J_M\in\I(M)$ and $J_N\in\I(N)$ such that $J=J_M\cup J_N$.
As $J_N$ misses at least one element from each of the disjoint circuits of $N$ in $I$, the set $D := I\setminus J_N$ is infinite. Moreover, we have that $D\subseteq J_M$, since $I\subseteq J$.
In particular, there is an infinite subset $L' \subseteq L$ such that $D+l$ contains a circuit of $M$ for every $\ell \in L'$. Indeed, for every $e \in D$ is contained in some $M_{n_e}$; let then $L' = \{\ell_{n_e}: e \in D\}$ and note that $L' \cap J = \emptyset$. 
This shows that $J_M$ and $L'$ are disjoint and thus $J$ and $L'$ are disjoint as well, and the assertion follows.

Suppose, second, that there exists a sequence $i_1 < i_2 < \ldots$ such that $J$ is disjoint from $L'=\{\ell_{i_r}\ : \ r \in \N\}$.
We show that the superset $E\setminus L'$ of $J$ is in $\I(M\vee N)$.
To this end, set $D:= \{(i_r, r)\ |\ r\in\N\}$.
Then, $D$ meets every circuit $\N\times \{r\}$ of $N$ in $I$, so that the set  $J_N := \N\times \N\setminus D$ is independent in $N$.
On the other hand, $D$ contains a single element from each $M_n$ with $n\in L'$. Consequently, $J_M := (L\setminus L') \cup D\in\I(M)$ and therefore $E\setminus L' = J_M\cup J_N\in\I(M\vee N)$.
\end{proof}

While the union of two finitary matroids is a matroid, by \autoref{thm:finitary-union}, the same is not true for two co-finitary matroids. 

\begin{cor}
The union of two co-finitary matroids is not necessarily a matroid. 
\end{cor}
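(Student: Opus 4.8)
The statement is the corollary: the union of two co-finitary matroids need not be a matroid. The plan is to derive this directly from Claim~\ref{clm:cnt-fail} (or from Claim~\ref{thm:uncnt-fail}), which already exhibits a failure of matroid union for a pair of matroids one of which is co-finitary. So the only thing to check is that the other matroid in that construction is \emph{also} co-finitary, and then the corollary follows verbatim.

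First I would recall the matroid $M$ built in the proof of Claim~\ref{clm:cnt-fail}: it is obtained from $\bigoplus_{n\in\N}M_n$ by adding the remaining ground-set elements as loops, where each $M_n$ is a $1$-uniform matroid on the finite set $(\{n\}\times\{1,\dots,n\})\cup\{\ell_n\}$. Its dual $M^*$ is then the direct sum of the duals $M_n^*$ (each a co-$1$-uniform matroid on a \emph{finite} set) together with the added loops becoming coloops in $M^*$. A direct sum of matroids on finite ground sets is finitary (a set is dependent iff some finite summand sees a dependent restriction, and that is a finite witness), and adding coloops preserves finitariness. Hence $M^*$ is finitary, i.e.\ $M$ is co-finitary. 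Since the proof of Claim~\ref{clm:cnt-fail} explicitly states that $M$ is both finitary and co-finitary, I can simply cite that; and $N$ there is co-finitary by construction (a direct sum of finite circuit matroids with added loops: its dual is a direct sum of finite bond matroids with coloops, hence finitary). Therefore $\I(M\vee N)$ is not a matroid, with both $M$ and $N$ co-finitary, which is exactly the claim.

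In short, the proof is a one-line appeal: ``By \autoref{clm:cnt-fail} the matroid $M$ constructed there is co-finitary, as is $N$, yet $\I(M\vee N)$ is not a matroid.'' I would phrase it as follows.

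\begin{proof}
The matroid $M$ constructed in the proof of \autoref{clm:cnt-fail} is both finitary and co-finitary, and the matroid $N$ there is co-finitary, while $\I(M\vee N)$ is not a matroid. Hence the union of the two co-finitary matroids $M$ and $N$ fails to be a matroid.
\end{proof}

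There is essentially no obstacle here; the only thing one must be slightly careful about is making sure the ``other'' matroid in the cited failure example is genuinely co-finitary, and the proof of \autoref{clm:cnt-fail} was evidently engineered (the finite truncations $\{1,\dots,n\}$ rather than all of $\N$, and the attached loops $\ell_n$) precisely so that $M$ is co-finitary and the citation goes through cleanly.
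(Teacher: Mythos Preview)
Your formal proof is correct and matches the paper's approach exactly: the corollary is placed immediately after \autoref{clm:cnt-fail} with no separate proof, since the statement of that claim already asserts that $M$ is both finitary and co-finitary and that $N$ is co-finitary.

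One small inaccuracy in your plan (not in the proof itself): you describe $N$ as ``a direct sum of \emph{finite} circuit matroids with added loops''. In fact each $N_r$ is the circuit matroid on the \emph{infinite} set $\N\times\{r\}$, so these are infinite circuits. The conclusion that $N$ is co-finitary is nevertheless correct---the dual of an infinite single-circuit matroid is the $1$-uniform matroid on the same ground set, whose circuits are all $2$-element sets---and in any case the co-finitariness of $N$ is part of the statement of \autoref{clm:cnt-fail}, so your formal proof needs no adjustment.
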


\section{Union}\label{sec:matroid-union}

In this section, we prove \autoref{thm:nearly-union}. The main difficulty in proving this theorem is the need to verify that given two nearly finitary matroids $M_1$ and $M_2$, that the set system $\I(M_1 \vee M_2)$ satisfies the axioms (IM) and (I3).

To verify the (IM) axiom for the union of two nearly finitary matroids we shall require the following theorem, proved below in \autoref{sec:fin-mat}. 

\begin{pro}\label{thm:finitary-union} 
If $M_1$ and $M_2$ are finitary matroids, then $M_1 \vee M_2$ is a finitary matroid. 
\end{pro}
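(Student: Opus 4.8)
The plan is to check that the set system $\I := \I(M_1 \vee M_2)$ given by \eqref{eq:what-is-union} satisfies the independence axioms and is finitary as a set system; the matroid $M_1\vee M_2$ then exists, and since $\I(M_1\vee M_2)=\I$ is finitary it is a finitary matroid. Axioms (I1) and (I2) are immediate: $\emptyset=\emptyset\cup\emptyset$, and if $I=I_1\cup I_2$ with $I_j\in\I(M_j)$ and $I'\subseteq I$, then $I'=(I'\cap I_1)\cup(I'\cap I_2)$ with $I'\cap I_j\in\I(M_j)$. The real content is that $\I$ is finitary; granted this, (IM) and (I3) follow with little effort.

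To prove $\I$ finitary I would take a set $S\subseteq E$ all of whose finite subsets lie in $\I$ and produce a function $c\colon S\to\{1,2\}$ with $c^{-1}(1)\in\I(M_1)$ and $c^{-1}(2)\in\I(M_2)$; then $S=c^{-1}(1)\cup c^{-1}(2)$ witnesses $S\in\I$. The tool is compactness. Give $\{1,2\}^S$ the product topology, compact by Tychonoff's theorem, and for each finite $F\subseteq S$ put
\[
\mathcal K_F=\bigl\{\,c\in\{1,2\}^S:\ c^{-1}(1)\cap F\in\I(M_1)\ \text{and}\ c^{-1}(2)\cap F\in\I(M_2)\,\bigr\}.
\]
Membership in $\mathcal K_F$ depends only on the restriction of $c$ to $F$, so $\mathcal K_F$ is clopen; it is nonempty because $F\in\I$ admits a $2$-colouring whose two colour classes are independent in $M_1$ and in $M_2$ respectively, and this colouring extends to $S$ arbitrarily. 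As $\I(M_1)$ and $\I(M_2)$ are closed under subsets, $\mathcal K_{F_1\cup\cdots\cup F_n}\subseteq\mathcal K_{F_1}\cap\cdots\cap\mathcal K_{F_n}$, so $\{\mathcal K_F\}_F$ has the finite intersection property and, by compactness, $\bigcap_F\mathcal K_F\neq\emptyset$. Any $c$ in this intersection has every finite subset of $c^{-1}(j)$ in $\I(M_j)$ (apply the definition of $\mathcal K_F$ with $F$ that subset), so $c^{-1}(j)\in\I(M_j)$ because $M_j$ is finitary. This is the required colouring. (One could invoke Rado's selection lemma in place of Tychonoff's theorem.)

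With finitariness in hand, (IM) is a Zorn's lemma argument: a chain in $\{I'\in\I:\ I\subseteq I'\subseteq X\}$ has a union whose finite subsets each lie in a single chain member, hence in $\I$, so a maximal element exists. For (I3) I would reduce to the finite case. For any $X\subseteq E$, a subset of $X$ is a union of a set in $\I(M_1)$ and a set in $\I(M_2)$ if and only if it is such a union with both sets contained in $X$; hence $\I\cap 2^X$ is exactly the set of independent sets of the union of the two restricted matroids, and for finite $X$ this is a \emph{finite} matroid by the classical matroid union theorem~\cite{Oxley}. Now let $I,I'\in\I$ with $I'$ maximal and $I$ not maximal, and choose $y\notin I$ with $I+y\in\I$. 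If $y\in I'$ we are done; otherwise $I'+y\notin\I$, so by finitariness there is a finite circuit $D$ of $\I$ with $y\in D$ and $D\setminus y\subseteq I'$, and then $D\setminus y\not\subseteq I$ since $D\subseteq I+y\in\I$ is impossible. If $I+z\notin\I$ for every $z\in(D\setminus y)\setminus I$, pick finite witnesses $F_z\subseteq I$ with $F_z+z\notin\I$, and work inside the finite matroid on $F:=D\cup\bigcup_z F_z$: there $(I\cap F)+y$ is independent while $(I\cap F)+z$ is dependent for each $z\in(D\setminus y)\setminus I$, so $I\cap F$ spans all of $D\setminus y$ and therefore also $y$ (because $D$ is a circuit of this finite matroid), contradicting independence of $(I\cap F)+y$. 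Hence some $x\in(D\setminus y)\setminus I\subseteq I'\setminus I$ has $I+x\in\I$, which is (I3).

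I expect the finitariness step to be the main obstacle: it is the one place where the passage from the finite theory to the infinite theory genuinely bites, and it is resolved by the compactness argument above. The remaining axioms are then comparatively routine — (IM) by Zorn's lemma, and (I3) by reduction to the classical matroid union theorem, the only delicate point being the spanning argument inside the finite restriction on $F$.
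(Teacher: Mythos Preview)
Your proof is correct. The compactness step for finitariness is essentially the paper's argument in a different dress: the paper identifies finitary downward-closed systems with closed (hence compact) subsets of $\P(E)$ in the product topology and observes that the union map $(A,B)\mapsto A\cup B$ is continuous, so it sends the closed set $\I(M_1)\times\I(M_2)$ to a closed set; your $\{1,2\}^S$ colouring version is the same Tychonoff argument unpacked fibrewise. Likewise, (IM) via Zorn from finitariness is exactly what the paper does.

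Where you genuinely diverge is in the proof of (I3). The paper does \emph{not} reduce to the finite union theorem; instead it develops exchange chains (\autoref{thm:chain}, \autoref{thm:i3'}) and proves that $\I(M_1\vee M_2)$ satisfies (I3) for \emph{arbitrary} matroids $M_1,M_2$, with no finitariness hypothesis. Your route---pick a finite circuit $D\subseteq I'+y$, collect finite dependence witnesses $F_z$, and run a closure argument inside the finite matroid on $D\cup\bigcup_z F_z$---is a clean and self-contained alternative that exploits finitariness twice (to get $D$ finite, and to get the $F_z$). It has the virtue of invoking only the classical finite theorem and elementary closure facts, so for \autoref{thm:finitary-union} taken in isolation it is arguably the shorter path. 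The price is generality: the paper needs (I3) for non-finitary matroids as well, since the proof of \autoref{thm:nearly-union} for nearly finitary matroids rests on the same exchange-chain machinery (via \autoref{thm:b2*}), and your argument does not extend there.
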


To verify (IM) for the union of finitary matroids we use a compactness argument (see \autoref{sec:fin-mat}). More specifically, we will show that $\I(M_1 \vee M_2)$ is a finitary set system whenever $M_1$ and $M_2$ are finitary matroids. It is then an easy consequence of Zorn's lemma that all finitary set systems satisfy (IM).   

The verification of axiom (I3) is dealt in a joint manner for both matroid families.  In the next section we prove the following.

\begin{pro}\label{thm:i3}
The set system $\I(M_1 \vee M_2)$ satisfies \emph{(I3)} for any two matroids $M_1$ and $M_2$. 
\end{pro}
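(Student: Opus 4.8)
The plan is a proof by contradiction built around an augmenting‑path argument driven by the maximality of $I'$. After replacing $M_1$ and $M_2$ by their direct sums with loops we may assume they share a ground set $E$, which does not affect $\I(M_1\vee M_2)$. Suppose $I,I'\in\I(M_1\vee M_2)$ with $I'$ maximal, $I$ not maximal, and $I+x\notin\I(M_1\vee M_2)$ for every $x\in I'\setminus I$; fix $\I$-partitions $I=A_1\sqcup A_2$ and $I'=A_1'\sqcup A_2'$ with $A_i,A_i'\in\I(M_i)$. Since $I$ is not maximal there is $e_0\notin I$ with $I+e_0\in\I(M_1\vee M_2)$; after swapping the roles of $M_1$ and $M_2$ and re-choosing $(A_1,A_2)$ if necessary, we may assume $A_1+e_0\in\I(M_1)$. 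Note $e_0\notin I'$, for otherwise $x=e_0$ would violate our assumption; hence $I'+e_0\notin\I(M_1\vee M_2)$ by maximality of $I'$, so for each $i$ the set $A_i'+e_0$ is $M_i$-dependent and therefore contains a circuit of $M_i$, necessarily through $e_0$, and by circuit elimination this is the \emph{unique} circuit of $M_i$ contained in $A_i'+e_0$.

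First I would isolate the basic step. Say an element $e\notin I$ is loose for side $i$ with respect to a running $\I$-partition $P_1\sqcup P_2=I$ if $P_i+e\in\I(M_i)$; initially $e_0$ is loose for side $1$ with respect to $(A_1,A_2)$, and, by our failure assumption, no loose element ever lies in $I'$. Given a loose $e$ for side $i$, maximality of $I'$ forces the circuit $C$ of $M_i$ in $A_i'+e$ through $e$; since $P_i+e$ is independent it cannot contain $C$, so there is $f\in (C-e)\setminus P_i\subseteq A_i'$. If $f\notin I$ then $f\in I'\setminus I$, and feeding back the failure assumption — which gives $P_j+f\notin\I(M_j)$ for $j=1,2$ — together with the fact that each fundamental circuit encountered has an element free to be removed, one unwinds the exchanges recorded along the sequence built so far to produce an $\I$-partition of $I+f$, contradicting the assumption. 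If $f\in I$, then $f\in I\cap I'$; one carries out the forced exchange of $f$ to the other side of the running partition (legitimate by the unique-circuit property), which makes a new element loose for side $3-i$, and the step repeats. At each step the relevant fundamental circuit has enough elements outside the current $M_i$-part that the chosen element can be taken outside the finite set of elements already used, so the alternating sequence $e_0,f_1,e_1,f_2,\dots$ stays injective.

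Since a successful step is impossible under the failure assumption, the process never halts and yields an infinite alternating sequence together with an infinite family of circuits, alternately of $M_1$ and of $M_2$, each responsible for one exchange and contained — up to a single element outside $I'$ — in the corresponding $A_i'$. Separating the $M_1$-circuits from the $M_2$-circuits, one reorganizes each family into the configuration required by the circuit elimination axiom (C): a host circuit $C$, the set $X\subseteq C$ of elements it is asked to eliminate, and for each $x\in X$ a circuit meeting $X$ exactly in $x$ — the last property being available because the displaced elements lie in $I'\setminus I$ whereas the circuits resolving them lie in $P_j+x$ for a subset $P_j$ of $I$, which meets $I'\setminus I$ only in $x$. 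Axiom (C) then returns a circuit of $M_i$ lying in the union of these circuits with $X$ removed, and one checks that this circuit is trapped inside one of the independent sets $A_1'$, $A_2'$, $A_1+e_0$ — the contradiction that finishes the proof.

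The step I expect to fight with is the last one: packaging the circuits spun off by the (possibly genuinely infinite) alternating sequence into exactly the ``one private element per circuit'' shape that axiom (C) demands, and then pinning down the independent set that traps the circuit it produces. The secondary difficulty is the bookkeeping inside the basic step — keeping the running $\I$-partition of $I$ valid and the sequence injective while the loose element travels, and in particular checking that, the first time a displaced element leaves $I$, unwinding the recorded exchanges really does give a legitimate $\I$-partition of $I+f$.
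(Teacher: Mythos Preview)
Your plan has a real gap already in the ``basic step''. Take the very first iteration: $e_0$ is loose for side~$1$ (so $P_1+e_0\in\I(M_1)$), you pass to the circuit $C\subseteq A_1'+e_0$ of $M_1$, and pick $f\in (C-e_0)\setminus P_1\subseteq A_1'$. If $f\notin I$ you claim to ``unwind the exchanges recorded along the sequence built so far to produce an $\I$-partition of $I+f$''. But no exchanges have been recorded, and nothing you know relates $f$ to the partition $(P_1,P_2)$ of $I$: the only datum is $P_1+e_0\in\I(M_1)$, which says nothing about $P_1+f$ or $P_2+f$. So the promised contradiction does not appear. The difficulty is structural: your circuits live on the $I'$ side (inside $A_i'+e$), while the partition you want to modify is on the $I$ side, and you never supply a mechanism that converts information about $A_i'$-circuits into legal exchanges on $(P_1,P_2)$. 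The ``forced exchange of $f$ to the other side of the running partition'' has the same problem: $f\in P_{3-i}$ gives no reason for $P_i+f\in\I(M_i)$. Because the finite steps are not sound, the infinite endgame with axiom~(C) never gets off the ground.

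The paper sidesteps all of this by working entirely with chains on the $I$ side. It first proves a variant (I3$'$): for maximal $B$ and any $x\in I\setminus B$ there is $y\in B\setminus I$ with $(I+y)-x\in\I$. A finite \emph{exchange chain} is a tuple $(y_0,\dots,y_n)$ with circuits $C_k\subseteq I_j+y_k$ containing $\{y_k,y_{k+1}\}$ for alternating $j$; a short lemma shows that any such chain from $y$ to $x$ already yields $(I+y)-x\in\I$. One then lets $A$ be the set of \emph{all} elements $a$ admitting a chain to $x$. Either $A$ meets $B\setminus I$ and the lemma finishes, or $A$ is ``closed'' in the sense that for every $y\notin A$ the fundamental circuit in $I_1+y$ avoids $A$ entirely. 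This closure property lets one swap, in a single stroke, the $A$-parts of $B$'s representation with the $A$-parts of $I$'s representation, producing a representation of $B+x$ and contradicting maximality of $B$; axiom~(C) is invoked exactly once, to check that the swapped pieces remain independent. The conceptual move you are missing is to replace the attempt to thread one (possibly infinite) alternating path by the whole reachable set $A$ together with its closure property: this turns a delicate termination/unwinding argument into one global exchange, and it keeps the chains and the circuits on the same side.
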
  

Indeed, for finitary matroids, \autoref{thm:i3} is fairly simple to prove. We, however, require this proposition to hold for nearly finitary matroids as well. Consequently, we prove this proposition in its full generality, i.e., for any pair of matroids.    
In fact, it is interesting to note that the union of infinitely many matroids satisfies (I3); though the axiom (IM) might be violated as seen in \autoref{exm:inf_union_fail}).

At this point it is insightful to note a certain difference between the union of finite matroids to that of finitary matroids in a more precise manner. By the finite matroid union theorem if $M$ admits two disjoint bases, then the union of these bases forms  a base of $M \vee M$. For finitary matroids the same assertion is false. 

\begin{clm}\label{clm:dif}
There exists an infinite finitary matroid $M$ with two disjoint bases whose union is not a base of the matroid $M\vee M$ as it is properly contained in the union of some other two bases. 
\end{clm}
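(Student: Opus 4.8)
I want to build a finitary matroid $M$ that has two disjoint bases $B_1, B_2$, so that $B_1 \cup B_2$ is independent in $M \vee M$, but is not maximal there — i.e.\ there are $B_1', B_2' \in \B(M)$ with $B_1 \cup B_2 \subsetneq B_1' \cup B_2'$. The natural candidate is a direct sum of finite pieces whose ranks and sizes are arranged so that a "greedy" pairing of bases wastes an element in each summand, while a cleverer global pairing does not. Concretely I would take $M = \bigoplus_{n \in \N} M_n$ where each $M_n$ is a finite uniform (or circuit) matroid of rank $r_n$ on a ground set of size $2r_n + 1$; this is finitary as a direct sum of finite matroids.

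Then I would exhibit the two disjoint bases explicitly. In each summand $M_n$ pick a partition of its ground set into $B_1 \cap E(M_n)$, $B_2 \cap E(M_n)$ (each of size $r_n$) and one leftover element $e_n$; setting $B_i = \bigcup_n (B_i \cap E(M_n))$ gives two disjoint bases of $M$, and $B_1 \cup B_2$ omits exactly the set $\{e_n : n \in \N\}$. It is independent in $M \vee M$ because it is the union of two independent sets of $M$. Now I would produce $B_1', B_2'$ strictly larger: the idea is to "shift" — within a single summand $B_1 \cup B_2$ already uses $2r_n$ of the $2r_n+1$ elements, so there is no room there, which means the extra room has to be found by partitioning the ground set of $M$ into two $M$-bases in a way that is not summand-by-summand balanced. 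That forces me to choose the $r_n$ so that $M$ itself has bases that are unbalanced across the summands; since a base of $M$ is exactly a choice of a base of each $M_n$, and each $M_n$ has rank exactly $r_n$, this cannot happen.

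So the straightforward direct-sum construction does not work, and this is the crux: I need an $M$ whose bases genuinely interact across infinitely many coordinates. The right move is to take $M$ \emph{finitary but of infinite rank in a single connected piece}, e.g.\ the cycle matroid of a suitable infinite graph, or better, a "thickened ray": let $M$ be the finitary matroid on $E = \{a_0, a_1, a_2, \ldots\} \cup \{b_0, b_1, b_2, \ldots\} \cup \{c_1, c_2, \ldots\}$ whose circuits are the triples $\{a_{n-1}, a_n, c_n\}$ and $\{b_{n-1}, b_n, c_n\}$ and the obvious derived finite circuits — the cycle matroid of a ladder-like graph on two rays glued by rungs $c_n$. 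Here $B_1 = \{a_n\}$ and $B_2 = \{b_n\}$ are disjoint bases; $B_1 \cup B_2$ misses every $c_n$. But one checks $\{a_n\}_n \cup \{c_n\}_n$ and $\{b_n\}_n$ are also two bases (using that appending each $c_n$ to the $a$-ray only creates circuits using the $b$'s, which are absent), and their union $\{a_n\} \cup \{b_n\} \cup \{c_n\} = E \supsetneq B_1 \cup B_2$.

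**The main obstacle,** as the discussion above shows, is precisely getting a finitary $M$ in which a "locally balanced" pair of bases fails to be globally maximal: direct sums of finite matroids are too rigid, so I expect to spend the real work on verifying that the ladder-type matroid above (or whichever infinite finitary matroid I settle on) genuinely has $B_1, B_2$ as bases of $M$ and $B_1', B_2'$ as bases with $B_1 \cup B_2 \subsetneq B_1' \cup B_2'$ — in particular checking that $\{a_n\}_{n} \cup \{c_n\}_{n \geq 1}$ spans $E$ in $M$ and is independent, which is where the infinitude is used (every would-be circuit inside it is infinite, hence not a circuit of the finitary $M$, while finitely many of these elements never contain a circuit). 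Once those membership checks are done, the statement follows immediately since $B_1 \cup B_2 \neq B_1' \cup B_2'$ witnesses non-maximality of $B_1 \cup B_2$ in $\I(M \vee M)$.
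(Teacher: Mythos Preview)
Your instinct to use the cycle matroid of an infinite ladder-like graph is exactly what the paper does, but your concrete construction is internally inconsistent. You explicitly declare $\{a_{n-1},a_n,c_n\}$ to be a circuit of $M$, and then assert that $B_1' = \{a_n\}_n\cup\{c_n\}_n$ is a base; but $B_1'$ contains every one of those circuits, so it is dependent. Your parenthetical justification (``appending each $c_n$ to the $a$-ray only creates circuits using the $b$'s'') flatly contradicts your own list of circuits. More structurally, in the cycle matroid of the ordinary one-sided ladder the rails $\{a_n\}$ and $\{b_n\}$ are not bases at all---neither spans the vertices of the other rail---and in fact the plain ladder has no two edge-disjoint spanning trees: the degree-$2$ end vertex $u_0$ forces both of its incident edges into $T_1\cup T_2$, and then $T_2$ (say) already has no edge left at $u_0$. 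So the ``main obstacle'' you anticipated is real, but your proposed example does not overcome it.

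The paper's fix is to take the one-sided ladder \emph{with every edge doubled} and use its finite-cycle matroid $M_F(H)$. Now there is enough room: if $T$ is any spanning tree of the underlying simple ladder, the two parallel copies of $T$ give disjoint bases $B_1,B_2$ of $M_F(H)$ whose union misses both copies of every non-tree edge; meanwhile a second pair $B_3,B_4$ of disjoint spanning trees can be arranged so that $B_3\cup B_4$ picks up one additional edge at the finite end, witnessing $B_1\cup B_2\subsetneq B_3\cup B_4$. Your plan is thus salvageable with this minimal repair---double the edges---after which the verification is routine.
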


\begin{proof}
Consider the infinite one-sided ladder with every edge doubled, say $H$, and recall that the bases of $M_F(H)$ are the ordinary spanning trees of $H$.
In \autoref{fig:disj_bases}, $(B_1,B_2)$ and $(B_3, B_4)$ are both pairs of disjoint bases of $M_F(H)$.
However, $B_3\cup B_4$ properly covers $B_1\cup B_2$ as it additionally contains the leftmost edge of $H$
\end{proof}

Clearly, a direct sum of infinitely many copies of $H$ gives rise to an infinite sequence of unions of disjoint bases, each properly containing the previous one. 
In fact, one can construct a (single) matroid formed as the union of two nearly finitary matroids that admits an infinite properly nested sequence
of unions of disjoint bases.
 
\begin{figure} [htpb]   
\begin{center}
	\includegraphics[width=\linewidth]{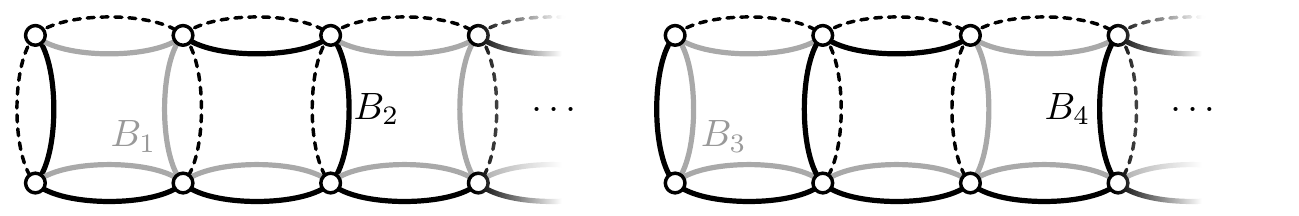}
	\caption{The disjoint bases $B_1$ and $B_2$ on the left are properly covered by the bases $B_3$ and $B_4$ on the right.}
	\label{fig:disj_bases}
\end{center}
\end{figure}

\subsection{Exchange chains and the verification of axiom (I3)}

In this section, we prove \autoref{thm:i3}. Throughout this section $M_1$ and $M_2$ are matroids. It will be useful  to show that the following variant of (I3) is satisfied.

\begin{pro}\label{thm:i3'}
 	The set $\I = \I(M_1\vee M_2)$ satisfies the following. 
	\begin{itemize}
		\item[(I3')] For all $I, B\in\I$ where $B$ is maximal and all $x\in I\setminus B$ there exists $y\in B\setminus I$ such that $(I + y) - x\in\I$.
	\end{itemize}
\end{pro}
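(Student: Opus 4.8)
The plan is to prove (I3$'$) via the standard device of \emph{exchange chains} adapted to the infinite setting. Fix $I,B\in\I=\I(M_1\vee M_2)$ with $B$ maximal, and fix $x\in I\setminus B$. Write $I=I_1\cup I_2$ and $B=B_1\cup B_2$ with $I_j,B_j\in\I(M_j)$; we may assume these decompositions are chosen so that $I_1\cap I_2=\emptyset$ and $B_1\cap B_2=\emptyset$, and so that (after possibly moving $x$ between the two parts) $x$ lies in, say, $I_1$. The idea is to look for an alternating sequence $x=e_0,e_1,\dots,e_k=y$, where consecutive elements alternate between the two matroids: $e_{2i}$ is handled by $M_1$, $e_{2i+1}$ by $M_2$, each step being a legal exchange in the relevant $M_j$ (i.e.\ $I_j - e_{2i} + e_{2i+1}$ stays independent in $M_j$, using the standard finite circuit-exchange inside a single matroid since circuits of $M_j$ relative to an independent set are finite — wait, this is false for non-finitary $M_j$, so instead one uses axiom (I3)/(IM) inside each $M_j$ directly). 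A chain is \emph{good} if its last element $y$ lies in $B\setminus I$; a minimal good chain then yields the desired $y$, because performing all the exchanges along it transforms $I$ into $(I+y)-x$ while keeping membership in $\I$.

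The key steps, in order, are: (1) set up exchange chains carefully, defining when a chain ``from $x$'' is admissible, and prove that admissibility of a chain lets us rewrite the decomposition of $I$ so that $x$ is removed, $y$ is added, and all intermediate elements are merely relabelled between $I_1$ and $I_2$; (2) prove a \emph{minimal-chain lemma}: in a chain of minimum length, no element repeats and no ``shortcut'' exchange is available — this is the combinatorial heart and is proved by contradiction, splicing out a sub-chain; (3) show that a good chain \emph{exists}. For step (3), suppose not: then the set $I'$ of all elements reachable from $x$ by admissible chains contains no element of $B\setminus I$. Using maximality of $B$ together with axiom (I3) applied inside $M_1$ and $M_2$, one derives that $I$ could be enlarged, contradicting either the maximality of $B$ (since $B$ maximal in $\I$ forces any independent superset comparison) — more precisely, one shows $B$ fails to be maximal, a contradiction. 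Here is where the infinite circuit elimination axiom (C) and (IM) inside each $M_j$ are needed, to guarantee that the ``frontier'' of reachable elements behaves and that a missing exchange element can always be located in $B$.

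The main obstacle I expect is step (3), the existence of a good chain, in full generality for arbitrary (not necessarily finitary) matroids: for finitary $M_j$ the circuits met by an independent set plus one element are finite, so chains are forced to be finite and a greedy/compactness argument closes things quickly, but for general matroids one must argue that the set of reachable elements, if it avoided $B\setminus I$, would contradict maximality of $B$ — and controlling this requires invoking (IM) to pass to a maximal independent set within the span of the reachable region and then (I3) to perform the final augmenting exchange. A secondary subtlety is bookkeeping: since we are only proving (I3$'$), not the stronger base-exchange statement, we need the chain to be \emph{short enough that only finitely many relabellings occur}, so that the resulting set $(I+y)-x$ is genuinely of the form $I_1'\cup I_2'$ with $I_j'\in\I(M_j)$; a minimal good chain is automatically finite provided one exists, which is exactly why step (2) precedes step (3) in the argument.
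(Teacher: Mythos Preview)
Your overall architecture matches the paper's: exchange chains, a set $A$ of elements reachable from $x$ by chains, and a contradiction to the maximality of $B$ when $A\cap(B\setminus I)=\emptyset$. Steps (1) and (2) are essentially the paper's \autoref{thm:chain} (proved by induction on chain length, using circuit elimination to repair the chain after one exchange). The genuine gap is step~(3): you have not identified \emph{how} to contradict the maximality of $B$, and your suggested route (``invoke (IM) to pass to a maximal independent set within the span of the reachable region and then (I3)'') is not the right mechanism.

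What the paper actually does is a \emph{swap construction}. With $A=A(I_1,I_2,x)$ the reachability set, one first records the key closure property: for any $y\notin A$, either $I_1+y\in\I(M_1)$ or the unique $M_1$-circuit in $I_1+y$ is \emph{disjoint from}~$A$ (otherwise one could prepend $y$ to a chain and get $y\in A$). Now, assuming $A\cap(B\setminus I)=\emptyset$, define
\[
B_1'=(B_1\setminus (B_1\cap A))\cup (I_1\cap A),\qquad B_2'=(B_2\setminus (B_2\cap A))\cup (I_2\cap A).
\]
Since $x\in A$ and $A\cap(B\setminus I)=\emptyset$, one checks $B+x\subseteq B_1'\cup B_2'$. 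The heart of the argument is showing $B_1'\in\I(M_1)$: if $B_1'$ contained an $M_1$-circuit $C$, then $C$ meets $I_1\cap A$ in some element $a$, while every $y\in C\setminus I_1$ lies outside $A$, so by the closure property each fundamental circuit $C_y\subseteq I_1+y$ avoids $a$. Applying the \emph{infinite} circuit elimination axiom (C) to $C$, eliminating the (possibly infinite) set $C\setminus I_1$ via the $C_y$ and fixing $a$, yields a circuit inside $I_1$, a contradiction. Thus $B+x\in\I$, contradicting maximality of $B$. This swap of the $A$-parts between the two representations, and the use of infinite (C) to certify independence afterwards, is the missing idea in your outline; neither (IM) nor single-matroid (I3) will produce it. A minor point: the paper takes $I_j$ to be a base of $M_j|I$ (so $I_1,I_2$ need not be disjoint), rather than insisting on $I_1\cap I_2=\emptyset$ as you do.
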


\noindent
Observe that unlike in (I3), the set $I$ in (I3') may be maximal. 

We begin by showing that \autoref{thm:i3'} implies \autoref{thm:i3}.

\begin{proof}[Proof of \autoref{thm:i3} from \autoref{thm:i3'}.] 
Let $I\in\I$ be non-maximal and $B\in\I$ be maximal.
As $I$ is non-maximal there is an $x\in E\setminus I$ such that $I+x\in\I$.
We may assume $x\notin B$ or the assertion follows by (I2).
By (I3'), applied to $I+x$, $B$, and $x\in (I+x)\setminus B$ there is $y\in B\setminus (I + x)$ such that $I + y\in \I$.
\end{proof}

We proceed to prove \autoref{thm:i3'}. The following notation and terminology will be convenient. A circuit of $M$ which contains a given set $X\subseteq E(M)$ is called an \emph{$X$-circuit}.
 
By a \emph{representation of} a set $I\in\I(M_1\vee M_2)$, we mean a pair $(I_1, I_2)$ where $I_1\in\I(M_1)$ and $I_2\in \I(M_2)$ such that $I= I_1\cup I_2$.

For sets $I_1\in\I(M_1)$ and $I_2\in\I(M_2)$, and elements $x\in I_1\cup I_2$ and  $y\in E(M_1)\cup E(M_2)$ (possibly in $I_1\cup I_2$), a tuple $Y=(y_0=y, \ldots, y_n=x)$ is called an \emph{even $(I_1, I_2,y,x)$-exchange chain} (or \emph{even $(I_1, I_2,y,x)$-chain}) of \emph{length} $n$ if the following terms are satisfied. 

\begin{enumerate}[(X1)]
	\item For an even $i$, there exists a $\{y_i, y_{i+1}\}$-circuit $C_i\subseteq I_1+y_i$ of $M_1$.
	\item For an odd $i$, there exists a $\{y_i, y_{i+1}\}$-circuit $C_i\subseteq I_2+y_i$ of $M_2$.
\end{enumerate}

\noindent
If $n\geq 1$, then (X1) and (X2) imply that $y_0\notin I_1$ and that, starting with $y_1\in I_1\setminus I_2$, the elements $y_i$ alternate between $I_1\setminus I_2$ and $I_2\setminus I_1$; the single exception being $y_n$ which can lie in $I_1\cap I_2$.

By an \emph{odd exchange chain} (or \emph{odd chain}) we mean an even chain with the words `even' and `odd' interchanged in the definition.
Consequently, we say \emph{exchange chain} (or \emph{chain}) to refer to either of these notions.
Furthermore, a subchain of a chain is also a chain;
that is, given an $(I_1, I_2, y_0, y_n)$-chain $(y_0,\ldots, y_n)$, the tuple $(y_k,\ldots, y_l)$ is an $(I_1, I_2, y_k, y_l)$-chain for $0\leq k\leq l\leq n$.

\begin{lem}\label{thm:chain}
	If there exists an $(I_1, I_2,y,x)$-chain, then $(I + y) - x\in\I(M_1\vee M_2)$ where $I := I_1\cup I_2$.
	Moreover, if $x\in I_1\cap I_2$, then $I+y\in\I(M_1\vee M_2)$.
\end{lem}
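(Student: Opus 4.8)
\textbf{Proof plan for \autoref{thm:chain}.}

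The plan is to induct on the length $n$ of the given exchange chain $Y=(y_0=y,\ldots,y_n=x)$, proving both assertions simultaneously. Without loss of generality assume $Y$ is an even $(I_1,I_2,y,x)$-chain, so that (X1) applies at even indices and (X2) at odd indices; the odd case is symmetric under swapping $M_1$ and $M_2$. The base case $n=0$ has $y=x$, so $(I+y)-x=I\in\I(M_1\vee M_2)$ trivially, and if $x\in I_1\cap I_2$ then $I+y=I$ as well. For the inductive step I would peel off the \emph{first} link of the chain rather than the last: by (X1) there is a $\{y_0,y_1\}$-circuit $C_0\subseteq I_1+y_0$ of $M_1$, which means $I_1':=(I_1+y_0)-y_1\in\I(M_1)$ (a standard matroid fact: if $C$ is the unique circuit in $I_1+y_0$ and $y_1\in C$, deleting any element of $C$ restores independence; more carefully, $C_0$ is \emph{a} circuit through $y_0$ inside $I_1+y_0$, hence $I_1+y_0$ has rank equal to $|I_1|$, and removing $y_1\in C_0$ keeps it spanning and now independent). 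Then $(y_1,\ldots,y_n)$ is an $(I_1',I_2,y_1,x)$-chain — here one must check the chain conditions still hold with $I_1$ replaced by $I_1'$: the only links referencing $M_1$ are at even indices $i\geq 2$, whose circuits $C_i\subseteq I_1+y_i$ satisfy $y_1\notin C_i$ (since $y_1$ alternates out of $I_1\setminus I_2$-position only at its own index, and the $y_i$ are distinct along the chain), so in fact $C_i\subseteq I_1'+y_i$ as well.

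The subtle point, and what I expect to be the main obstacle, is bookkeeping the membership relations carefully enough that the induction hypothesis applies and then reassembling the representation. Applying the induction hypothesis to the shortened chain $(y_1,\ldots,y_n)$ of length $n-1$ gives $(I_1'\cup I_2 + y_1) - x \in \I(M_1\vee M_2)$, and since $y_1\in I_1'$ by construction (it is $y_0$'s replacement — wait, no: $y_1\notin I_1'$; rather $y_0\in I_1'$), I need to track that $I_1'\cup I_2$ already contains $y_0=y$ and is missing $y_1$, so that $(I_1'\cup I_2+y_1)-x = ((I_1\cup I_2)+y_0-y_1+y_1)-x = (I+y)-x$ provided $y_1\in I_1\cup I_2$, which holds for $n\geq 2$ since then $y_1\in I_1\setminus I_2$; the degenerate case $n=1$ (where $y_1=x$ may lie in $I_1\cap I_2$) must be handled by hand using the base case. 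For the "moreover" clause, when $x\in I_1\cap I_2$ the induction hypothesis yields the stronger conclusion $I_1'\cup I_2 + y_1\in\I(M_1\vee M_2)$, and the same substitution gives $I+y\in\I(M_1\vee M_2)$.

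Throughout, the representations must be produced explicitly: the witness for $(I+y)-x\in\I(M_1\vee M_2)$ is a pair $(I_1'',I_2'')$ with $I_1''\in\I(M_1)$, $I_2''\in\I(M_2)$, and $I_1''\cup I_2'' = (I+y)-x$, obtained by unwinding the inductive construction — each step either moves an element from the $M_1$-side to account for a $C_i\subseteq I_1+y_i$ (replacing $y_{i+1}$ by $y_i$ in the first coordinate) or symmetrically on the $M_2$-side. I would state this as: the chain prescribes an alternating sequence of single-element swaps, even-indexed swaps acting on the $M_1$-component and odd-indexed ones on the $M_2$-component, and one verifies inductively that after processing links $i,i+1,\ldots,n-1$ the current pair is a valid representation of the current set. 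The one genuine care-point is confirming that when we modify $I_1$ to $I_1'$, the later $M_1$-circuits $C_i$ ($i$ even, $i\geq 2$) are unaffected — this is exactly where the alternation structure noted right after (X2) in the text (the $y_i$ are distinct and alternate sides) is used, and I would cite that observation rather than reprove it.
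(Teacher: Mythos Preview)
Your overall architecture matches the paper's proof: induct on the chain length, peel off the first link by setting $I_1':=(I_1+y_0)-y_1$, and apply the induction hypothesis to the tail $(y_1,\ldots,y_n)$. The identity $(I_1'\cup I_2)+y_1=(I_1\cup I_2)+y_0$ and your treatment of the ``moreover'' clause are essentially as in the paper.

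However, there is a genuine gap at the step where you claim the tail is still a chain with respect to $(I_1',I_2)$. You assert that for even $i\ge 2$ the circuit $C_i\subseteq I_1+y_i$ satisfies $y_1\notin C_i$, on the grounds that the chain elements $y_j$ are distinct. This conflates two different things: $C_i$ is an entire circuit inside $I_1+y_i$, not just the pair $\{y_i,y_{i+1}\}$, and since $y_1\in I_1$ there is no reason whatsoever that $y_1\notin C_i$. Distinctness of the $y_j$ says nothing about which elements of $I_1$ the circuit $C_i$ passes through.

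The paper repairs exactly this point, and it requires real work. First, one takes $Y$ to be a \emph{shortest} $(I_1,I_2,y,x)$-chain. If $y_1\in C_i$ for some even $i\ge 2$, one applies circuit elimination to $C_0$ and $C_i$, eliminating $y_1$ and fixing $y_{i+1}$, to obtain a circuit $C_i'\subseteq (C_0\cup C_i)-y_1\subseteq I_1'+y_i$. For this to be a $\{y_i,y_{i+1}\}$-circuit one needs $y_{i+1}\notin C_0$; but if $y_{i+1}\in C_0$, then $(y_0,y_{i+1},\ldots,y_n)$ would be a strictly shorter $(I_1,I_2,y,x)$-chain, contradicting minimality. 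This use of minimality and circuit elimination is the missing idea in your plan. (The verification that $I_1'\in\I(M_1)$ also uses that $C_0$ is the \emph{unique} circuit in $I_1+y_0$, which follows from circuit elimination, not from a rank count.)
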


\noindent
\textbf{Remark.}
In the proof of \autoref{thm:chain} chains are used in order to alter the sets $I_1$ and $I_2$; the change is in a single element. Nevertheless, to accomplish this change, exchange chain of arbitrary length may be required; for instance, a chain of length four is needed to handle the configuration depicted in \autoref{fig:chain}. 

\begin{figure}[htbp]
	\centering
	\subfloat[the initial representation\label{fig:chain1}]{\includegraphics{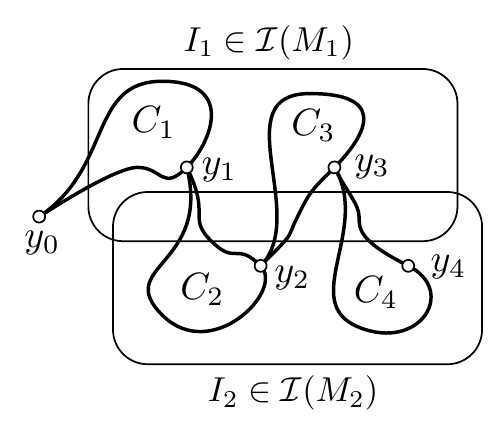}}
	\hspace{1cm}
	\subfloat[the obtained representation\label{fig:chain2}]{\includegraphics{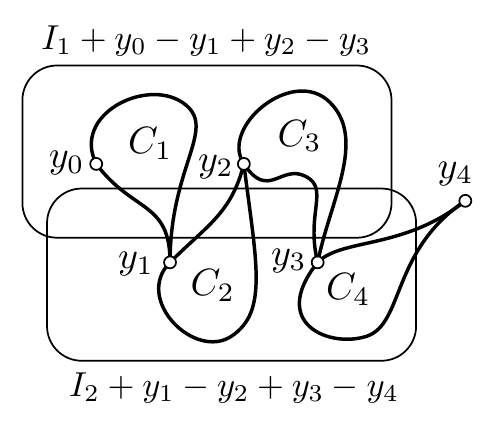}}
	\caption{An even exchange chain of length $4$.}
	\label{fig:chain}
\end{figure}

Next, we prove \autoref{thm:chain}.

\begin{proof}[Proof of \autoref{thm:chain}]
The proof is by induction on the length of the chain.
The statement is trivial for chains of length $0$.
Assume $n\geq 1$ and that $Y=(y_0,\ldots, y_n)$ is a shortest $(I_1, I_2, y, x)$-chain.
Without loss of generality, let $Y$ be an even chain.
If $Y':=(y_1,\ldots, y_n)$ is an (odd) $(I_1', I_2, y_1, x)$-chain where $I_1':= (I_1 + y_0) - y_1$, then $((I_1'\cup I_2) + y_1) - x\in\I(M_1\vee M_2)$ by the induction hypothesis and the assertion follows, since $(I_1'\cup I_2) + y_1 = (I_1\cup I_2) + y_0$.
If also $x\in I_1\cap I_2$, then either $x\in I_1'\cap I_2$ or $y_1 = x$ and hence $n=1$.
In the former case $I+y\in\I(M_1\vee M_2)$ follows from the induction hypothesis and in the latter case $I + y = I_1'\cup I_2\in\I(M_1\vee M_2)$ as $x\in I_2$.

Since $I_2$ has not changed, (X2) still holds for $Y'$, so to verify that $Y'$ is an $(I_1', I_2, y_1, x)$-chain, it remains to show $I_1'\in\I(M_1)$ and to check (X1).
To this end, let $C_i$ be a $\{y_i, y_{i+1}\}$-circuit of $M_1$ in $I_1 + y_i$ for all even~$i$.
Such exist by (X1) for $Y$.
Notice that any circuit of $M_1$ in $I_1 + y_0$ has to contain $y_0$ since $I_1\in\I(M_1)$.
On the other hand, two distinct circuits in $I_1+y_0$ would give rise to a circuit contained in $I_1$ by the circuit elimination axiom applied to these two circuits, eliminating $y_0$.
Hence $C_0$ is the unique circuit of $M_1$ in $I_1+y_0$ and $y_1\in C_0$ ensures $I_1'= (I_1 + y_0) - y_1\in\I(M_1)$.

To see (X1), we show that there is a $\{y_i, y_{i+1}\}$-circuit $C_i'$ of $M_1$ in $I_1' + y_i$ for every even $i\geq 2$.
Indeed, if $C_i\subseteq I_1' + y_i$, then set $C_i':= C_i$;
else, $C_i$ contains an element of $I_1\setminus I_1'=\{y_1\}$.
Furthermore, $y_{i+1}\in C_i\setminus C_0$;
otherwise $(y_0, y_{i+1}, \ldots, y_n)$ is a shorter $(I_1, I_2, y, x)$-chain for, contradicting the choice of $Y$.
Applying the circuit elimination axiom to $C_0$ and $C_i$, eliminating $y_1$ and fixing $y_{i+1}$, yields a circuit $C_i'\subseteq (C_0\cup C_i) - y_1$ of $M_1$ containing $y_{i+1}$.	
Finally, as $I_1'$ is independent and $C_i'\setminus I_1'\subseteq \{y_i\}$ it follows that $y_i\in C_i'$.
\end{proof}

We shall require the following. 
For $I_1\in\I(M_1)$, $I_2\in\I(M_2)$, and $x\in I_1\cup I_2$, let
\[
	A(I_1,I_2,x):= \{a\ |\ \text{there exists an $(I_1, I_2,a,x)$-chain}\}.
\]
This has the property that
\begin{equation}\label{eqn:a_property}
\begin{minipage}[c]{0.8\textwidth}
	for every $y\notin A$, either $I_1+y\in \I(M_1)$ or the unique circuit $C_y$ of $M_1$ in $I_1 + y$ is disjoint from $A$.
\end{minipage}
\end{equation}
To see this, suppose $I_1+y\notin\I(M_1)$.
Then there is a unique circuit $C_y$ of $M_1$ in $I_1 + y$.
If $C_y\cap A=\emptyset$, then the assertion holds so we may assume that $C_y\cap A$ contains an element, $a$ say.
Hence there is an $(I_1, I_2, a, x)$-chain $(y_0=a, y_1, \ldots, y_{n-1}, y_n=x)$.
As $a\in I_1$ this chain must be odd or have length $0$, that is, $a=x$.
Clearly, $(y, a, y_1,\ldots, y_{n-1}, x)$ is an even $(I_1, I_2, y, x)$-chain, contradicting the assumption that $y\notin A$.

Next, we prove \autoref{thm:i3'}.

\begin{proof}[Proof of \autoref{thm:i3'}]
Let $B\in\I(M_1\vee M_2)$ maximal, $I\in\I(M_1\vee M_2)$, and $x\in I\setminus B$.
Recall that we seek a $y\in B\setminus I$ such that $(I+y)-x\in\I(M_1\vee M_2)$.
Let $(I_1, I_2)$ and $(B_1, B_2)$ be representations of $I$ and $B$, respectively.
We may assume $I_1\in\B(M_1|I)$ and $I_2\in\B(M_2|I)$.
We may further assume that for all $y\in B\setminus I$ the sets $I_1+y$ and $I_2+y$ are dependent in $M_1$ and $M_2$, respectively, for otherwise it holds that $I+y\in\I(M_1\vee M_2)$ so that the assertion follows.
Hence, for every $y\in (B\cup I)\setminus I_1$ there is a circuit $C_y\subseteq I_1+y$ of $M_1$;
such contains $y$ and is unique since otherwise the circuit elimination axiom applied to these two circuits eliminating $y$ yields a circuit contained in $I_1$, a contradiction.

If $A:= A(I_1, I_2, x)$ intersects $B\setminus I$, then the assertion follows from \autoref{thm:chain}.
Else, $A\cap (B\setminus I) = \emptyset$, in which case we derive a contradiction to the maximality of $B$.
To this end, set (\autoref{fig:i_1_i_2})
\[
	B_1':= (B_1 \setminus b_1) \cup i_1\qquad\text{where}\quad b_1:= B_1\cap A\quad\text{and}\quad i_1:= I_1\cap A
\]
\[
	B_2':= (B_2 \setminus b_2) \cup i_2\qquad\text{where}\quad b_2:= B_2\cap A\quad\text{and}\quad i_2:= I_2\cap A
\]

\begin{figure} [htpb]   
	\begin{center}
		\includegraphics[width=5cm]{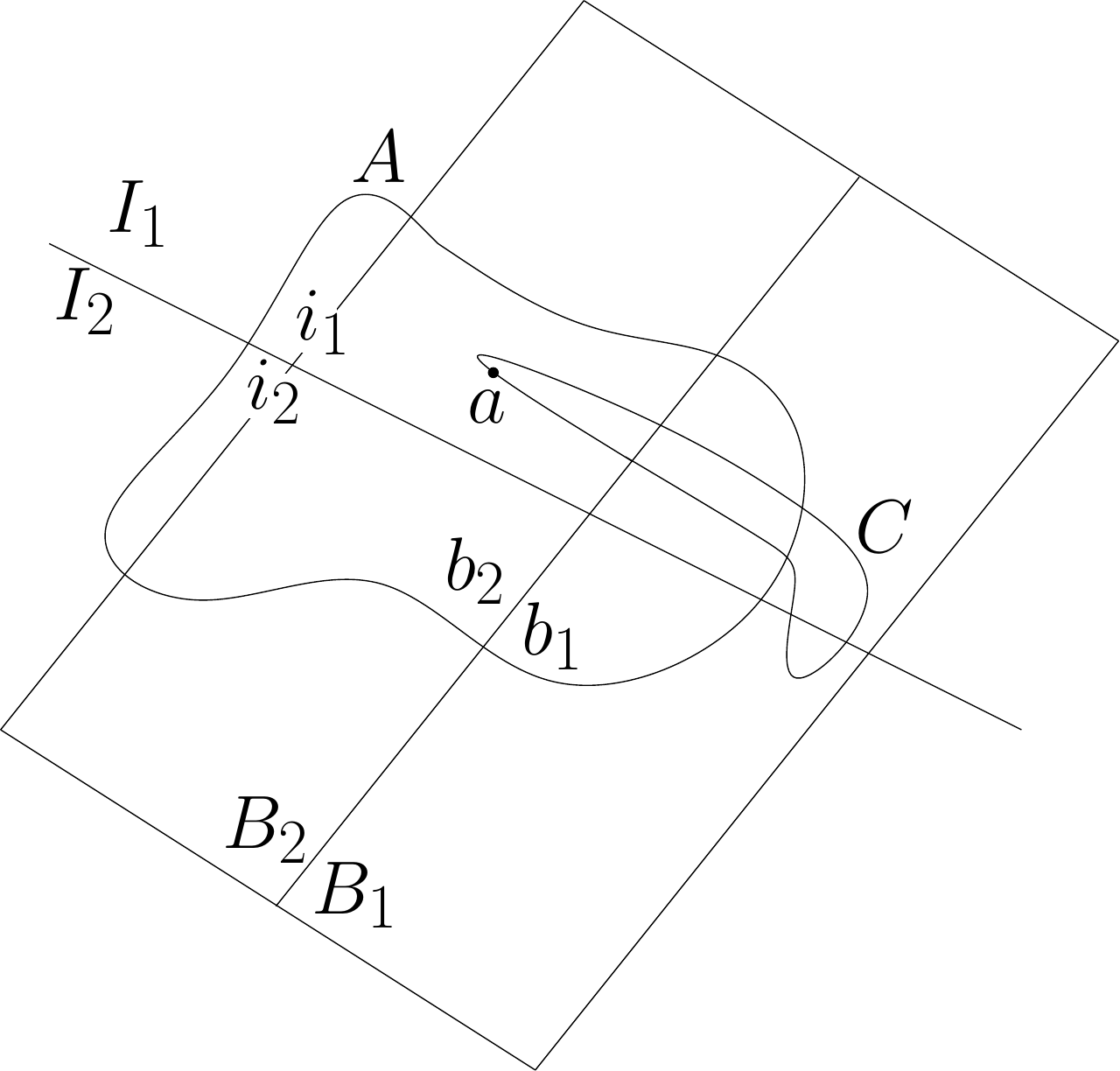}
		\caption{The independent sets $I_1$, at the top, and $I_2$, at the bottom, the bases $B_1$, on the right, and $B_2$, on the left, and their intersection with $A$.}
		\label{fig:i_1_i_2}
	\end{center}
\end{figure}

Since $A$ contains $x$ but is disjoint from $B\setminus I$, it holds that $(b_1\cup b_2) + x\subseteq i_1\cup i_2$ and thus $B+x \subseteq B_1'\cup B_2'$.
It remains to verify the independence of $B_1'$ and $B_2'$ in $M_1$ and $M_2$, respectively.

Without loss of generality it is sufficient to show $B_1'\in\I(M_1)$.
For the remainder of the proof `independent' and `circuit' refer to the matroid~$M_1$.
Suppose for a contradiction that the set $B_1'$ is dependent, that is, it contains a circuit~$C$.
Since $i_1$ and $B_1\setminus b_1$ are independent, neither of these contain~$C$.
Hence there is an element $a\in C\cap i_1\subseteq A$.
But $C\setminus I_1\subseteq B_1\setminus A$ and therefore no $C_y$ with $y\in C\setminus I_1$ contains $a$ by~(\ref{eqn:a_property}).
Thus, applying the circuit elimination axiom on $C$ eliminating all $y\in C\setminus I_1$ via $C_y$ fixing $a$, yields a circuit in $I_1$, a contradiction.
\end{proof}

Since in the proof of \autoref{thm:i3'} the maximality of $B$ is only used in order to avoid the case that $B+x\in\I(M_1\vee M_2)$, one may prove the following slightly stronger statement.

\begin{cor}
For all $I, J\in\I(M_1\vee M_2)$ and $x\in I\setminus J$, if $J+x\notin\I(M_1\vee M_2)$, then there exists $y\in J\setminus I$ such that $(I+y)-x\in\I(M_1\vee M_2)$.
\end{cor}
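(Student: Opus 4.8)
The plan is to adapt the proof of \autoref{thm:i3'} almost verbatim, observing where exactly maximality of $B$ was invoked and noting it can be replaced by the hypothesis $J+x\notin\I(M_1\vee M_2)$. So let $I,J\in\I(M_1\vee M_2)$, let $x\in I\setminus J$, and assume $J+x\notin\I(M_1\vee M_2)$; we seek $y\in J\setminus I$ with $(I+y)-x\in\I(M_1\vee M_2)$. Fix representations $(I_1,I_2)$ and $(J_1,J_2)$ of $I$ and $J$. As before, we may assume $I_1\in\B(M_1\mid I)$ and $I_2\in\B(M_2\mid I)$; and we may assume that for every $y\in J\setminus I$ both $I_1+y$ and $I_2+y$ are dependent, since otherwise $(I+y)-x\in\I(M_1\vee M_2)$ directly (indeed $I+y\in\I(M_1\vee M_2)$) and we are done with any such $y$ — here, unlike in \autoref{thm:i3'}, we do need $y\in J\setminus I$, but that is exactly the set we are quantifying over. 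Consequently, for every $y\in (J\cup I)\setminus I_1$ there is a unique circuit $C_y\subseteq I_1+y$ of $M_1$, by the same circuit-elimination argument.

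Next I would set $A:=A(I_1,I_2,x)$ and split into two cases exactly as in the proof of \autoref{thm:i3'}. If $A$ meets $J\setminus I$, say $a\in A\cap(J\setminus I)$, then by definition of $A$ there is an $(I_1,I_2,a,x)$-chain, and \autoref{thm:chain} gives $(I+a)-x\in\I(M_1\vee M_2)$; since $a\in J\setminus I$ this is the desired $y$. If instead $A\cap(J\setminus I)=\emptyset$, I would derive a contradiction, this time with the hypothesis $J+x\notin\I(M_1\vee M_2)$ in place of the maximality of $B$. Define
\[
  J_1':=(J_1\setminus b_1)\cup i_1,\qquad J_2':=(J_2\setminus b_2)\cup i_2,
\]
where $b_k:=J_k\cap A$ and $i_k:=I_k\cap A$. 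Since $x\in A$ but $A\cap(J\setminus I)=\emptyset$, one checks as before that $(b_1\cup b_2)+x\subseteq i_1\cup i_2$, hence $J+x\subseteq J_1'\cup J_2'$. The independence of $J_1'$ in $M_1$ and of $J_2'$ in $M_2$ is proved by the identical circuit-elimination argument used in \autoref{thm:i3'}, relying only on property~(\ref{eqn:a_property}) and on $I_1,I_2$ being bases of $M_1\mid I$, $M_2\mid I$ respectively — none of which used maximality of $B$. Thus $J+x$ is covered by an independent set of $M_1\vee M_2$, i.e.\ $J+x\in\I(M_1\vee M_2)$, contradicting the hypothesis.

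The only genuine point requiring care — and hence the main (mild) obstacle — is the bookkeeping in the final case: making sure that in replacing ``$B$ maximal'' by ``$J+x\notin\I(M_1\vee M_2)$'' nothing else in the argument secretly used maximality. A quick audit of the proof of \autoref{thm:i3'} shows it did not: maximality was used solely to rule out the case $B+x\in\I(M_1\vee M_2)$ (which here is ruled out by assumption) and, implicitly, to guarantee that the contradiction ``$B+x$ is independent in the union'' is actually a contradiction. Since here the analogous statement ``$J+x\in\I(M_1\vee M_2)$'' directly contradicts our standing hypothesis, the argument closes. One should also double-check the reduction at the start: if some $y\in J\setminus I$ has $I_1+y\in\I(M_1)$ or $I_2+y\in\I(M_2)$ then $I+y\in\I(M_1\vee M_2)$ and in particular $(I+y)-x\in\I(M_1\vee M_2)$, so that $y$ works — this is where we use that the quantifier is over $J\setminus I$ rather than $B\setminus I$, but it causes no trouble.
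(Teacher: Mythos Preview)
Your proposal is correct and follows exactly the approach the paper indicates: the paper does not give a separate proof of this corollary but simply remarks that in the proof of \autoref{thm:i3'} the maximality of $B$ is used only to rule out $B+x\in\I(M_1\vee M_2)$, so replacing that assumption by the hypothesis $J+x\notin\I(M_1\vee M_2)$ yields the corollary verbatim. Your audit of where maximality is (and is not) used is accurate, and the bookkeeping you flag all checks out.
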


Next, the proof of \autoref{thm:i3'}, shows that for any maximal representation $(I_1,I_2)$ of $I$ there is $y\in B\setminus I$ such that exchanging finitely many elements of $I_1$ and $I_2$ gives a representation of $(I+y)-x$.

For subsequent arguments, it will be useful to note the following corollary. Above we used chains whose last element is fixed. One may clearly use chains whose first element is fixed. If so, then one arrives at the following.
  
\begin{cor}\label{thm:b2*} 
For all $I, J\in\I(M_1\vee M_2)$ and $y\in J\setminus I$, if $I+y\notin\I(M_1\vee M_2)$, then there exists $x\in I\setminus J$ such that $(I + y) - x\in\I(M_1\vee M_2)$.
\end{cor}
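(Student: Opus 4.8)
The plan is to reduce \autoref{thm:b2*} to \autoref{thm:i3'} (or rather its proof) by exploiting the symmetry between the two roles played by the distinguished elements of an exchange chain. Recall that in the definition of an $(I_1, I_2, y, x)$-chain $(y_0 = y, \ldots, y_n = x)$, the element $y = y_0$ is the ``incoming'' element (it lies outside $I_1$, or outside $I_2$ in the odd case) and $x = y_n$ is the ``outgoing'' element (the only one permitted to lie in $I_1 \cap I_2$). In the proof of \autoref{thm:i3'} we fixed the outgoing element $x$ and let the set $A(I_1, I_2, x)$ collect all possible incoming elements $a$; \autoref{thm:chain} then converted any chain ending at $x$ whose start lies in $B \setminus I$ into the desired exchange. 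For \autoref{thm:b2*} the situation is mirrored: we are handed the incoming element $y \in J \setminus I$ and must produce an outgoing element $x \in I \setminus J$.

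First I would set up representations: let $(I_1, I_2)$ be a representation of $I$ with $I_1 \in \B(M_1 \mid I)$ and $I_2 \in \B(M_2 \mid I)$, and let $(J_1, J_2)$ be a representation of $J$. Since $I + y \notin \I(M_1 \vee M_2)$, in particular $I_1 + y$ and $I_2 + y$ are both dependent (otherwise $I + y$ would be independent), so there is a unique $M_1$-circuit $C_y^{(1)} \subseteq I_1 + y$ and a unique $M_2$-circuit $C_y^{(2)} \subseteq I_2 + y$, each containing $y$; these provide the length-$1$ chain $(y, y')$ for suitable $y'$ in the respective circuit. So the set of elements reachable from $y$ by chains is nonempty. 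Now I would define the ``forward'' analogue of $A$: let $A^{\ast}$ be the set of all $b$ such that there is an $(I_1, I_2, y, b)$-chain. The analogue of property \eqref{eqn:a_property} will be that for every $b \notin A^{\ast}$, the unique circuit of $M_1$ (resp.\ $M_2$) through $b$ in $I_1 + b$ (resp.\ $I_2 + b$), if it exists, is disjoint from $A^{\ast}$ — proven by the same prepend/append manipulation of chains. The key dichotomy is then: either $A^{\ast}$ meets $I \setminus J$, in which case some $x \in I \setminus J$ is the terminal element of a chain $(y, \ldots, x)$ and \autoref{thm:chain} gives $(I + y) - x \in \I(M_1 \vee M_2)$ as required; or $A^{\ast} \cap (I \setminus J) = \emptyset$, which I want to show is impossible.

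In the second case I would derive a contradiction exactly as in the proof of \autoref{thm:i3'}, but now starting from the representation $(J_1, J_2)$ of $J$: set $J_1' := (J_1 \setminus (J_1 \cap A^{\ast})) \cup (I_1 \cap A^{\ast})$ and $J_2' := (J_2 \setminus (J_2 \cap A^{\ast})) \cup (I_2 \cap A^{\ast})$. Since $y \in A^{\ast}$ (it is $y_0$ of every chain) and $A^{\ast}$ avoids $I \setminus J$ by assumption, one checks that $J + y \subseteq J_1' \cup J_2'$, and the same circuit-elimination argument as before — using the analogue of \eqref{eqn:a_property} to see that no $C_b$ with $b \in C \setminus I_1$ meets $A^{\ast}$, then eliminating all such $b$ from a putative circuit $C \subseteq J_1'$ to land inside $I_1$ — shows $J_1', J_2'$ are independent in $M_1, M_2$. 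Then $J + y \in \I(M_1 \vee M_2)$, contradicting the hypothesis.

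I expect the main obstacle to be purely bookkeeping: making precise the claim that a chain starting at the fixed element $y$ behaves under subchain/prepend operations the way chains ending at a fixed $x$ did, so that the analogue of \eqref{eqn:a_property} and the membership $J + y \subseteq J_1' \cup J_2'$ go through verbatim. Because the definition of exchange chain is not literally symmetric in its two endpoints (only the last element may lie in $I_1 \cap I_2$), one must be a little careful that the element $y \in J \setminus I$ genuinely sits outside $I_1$ or outside $I_2$ so that it can legitimately be the first coordinate of a chain — but this is immediate since $y \notin I = I_1 \cup I_2$. Once that is observed, the two proofs are mirror images and the rest is routine. In fact the cleanest write-up is probably just to remark that applying \autoref{thm:i3'}'s argument with the roles of first and last chain-element swapped yields the statement, which is essentially the one-line proof the authors indicate with ``one may clearly use chains whose first element is fixed.''
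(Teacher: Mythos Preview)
Your overall plan --- fix $y$ as the first element of chains, define $A^*$, and run the dichotomy --- is exactly what the paper intends. But your execution of the contradiction case goes wrong in a way that is not just notational.

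You modify $(J_1, J_2)$ to obtain $J_1', J_2'$ and claim $J + y \subseteq J_1' \cup J_2'$. Two problems. First, since $y \in J$ we have $J + y = J$, so even if the containment held you would only be re-proving $J \in \I(M_1 \vee M_2)$, which is given and contradicts nothing; the hypothesis you must violate is $I + y \notin \I(M_1 \vee M_2)$. Second, the containment itself fails: $J_1' \cup J_2' = (J \setminus A^*) \cup (I \cap A^*)$, and since $y \in A^*$ but $y \notin I$, the element $y$ lies in neither piece, so your modified pair does not even cover $J$.

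The correct mirror runs the other way. You must modify the representation of $I$, not of $J$, aiming to exhibit $I + y$ as a union of independents. Setting $I_1' := (I_1 \setminus A^*) \cup (J_1 \cap A^*)$ and $I_2'$ analogously gives
\[
I_1' \cup I_2' = (I \setminus A^*) \cup (J \cap A^*) = (I \setminus A^*) \cup A^* = I + y,
\]
using that $A^* \subseteq J$ (from $A^* \subseteq I + y$ and $A^* \cap (I \setminus J) = \emptyset$) and that $y \in A^*$. To prove $I_1' \in \I(M_1)$ one then needs the \emph{forward} closure property of $A^*$: for every $w \in A^*$ with $w \notin I_1$, the unique $M_1$-circuit $C_w \subseteq I_1 + w$ lies entirely in $A^*$. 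This is established by \emph{appending} $z \in C_w$ to a chain ending at $w$. Your stated analogue of \eqref{eqn:a_property} is the wrong one: it concerns $b \notin A^*$ and would be proved by \emph{prepending}, but with $y$ fixed as the first entry one cannot prepend, so that property is neither available nor what the circuit-elimination step actually uses here.
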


\subsection{Finitary matroid union}\label{sec:fin-mat}

In this section, we prove \autoref{thm:finitary-union}. In view of \autoref{thm:i3}, it remains to show that 
$\I(M_1\vee M_2)$ satisfies (IM) whenever $M_1$ and $M_2$ are finitary matroids. 

The verification of (IM) for countable finitary matroids can be done using  
K\"{o}nig's infinity lemma.
Here, in order to capture matroids on any infinite ground set, we employ a topological approach.
See~\cite{Armstrong} for the required topological background needed here.

We recall the definition of the product topology on $\P(E)$.
The usual base of this topology is formed by the system of all sets 
\[
C(A,B):= \{X\subseteq E\ |\ A\subseteq X, B\cap X=\emptyset\},
\]
where $A, B\subseteq E$ are finite and disjoint.
Note that these sets are closed as well.
Throughout this section, $\P(E)$ is endowed with the product topology and \emph{closed} is used in the topological sense only.\vspace{\baselineskip}

\noindent
We show that \autoref{thm:finitary-union} can easily be deduced from \autoref{thm:char_fini} and \autoref{thm:top_union}, presented next.

\begin{pro}\label{thm:char_fini}
Let  $\I= \lceil \I  \rceil\subseteq\P(E)$.
The following are equivalent.
\begin{enumerate}[\ref{thm:char_fini}.1. ]
	\item\label{fini_1} $\I$ is finitary;
	\item\label{fini_2} $\I$ is compact, in the subspace topology of $\P(E)$.
\end{enumerate}
\end{pro}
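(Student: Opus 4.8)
The plan is to prove the two implications separately, working with the characterization of $\I$ being finitary as: an infinite set $X$ lies in $\I$ whenever every finite subset of $X$ lies in $\I$ (equivalently, every finite subset of $X$ extends to an element of $\I$, since $\I$ is downward closed).

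\textbf{\ref{fini_1} $\Rightarrow$ \ref{fini_2}.} First I would show $\I$ is closed in $\P(E)$, which suffices since $\P(E) = 2^E$ is compact by Tychonoff. Suppose $X \notin \I$. If $X$ is finite, then $C(X, E \setminus X) = \{X\}$ is an open (and closed) neighbourhood of $X$ disjoint from $\I$ (using that $E$ is infinite one should instead pick a finite $B$; but in fact $\{X' : X \subseteq X'\}$ need not avoid $\I$, so the right argument is the following uniform one). Since $\I$ is finitary and $X \notin \I$, some finite subset $F \subseteq X$ has $F \notin \I$. Then $C(F, \emptyset) = \{X' \subseteq E : F \subseteq X'\}$ is an open neighbourhood of $X$, and every $X' \in C(F,\emptyset)$ contains $F \notin \I$, so $X' \notin \I$ by downward closure. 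Hence the complement of $\I$ is open, so $\I$ is closed, hence compact.

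\textbf{\ref{fini_2} $\Rightarrow$ \ref{fini_1}.} Suppose $\I$ is compact and let $X \subseteq E$ be an infinite set all of whose finite subsets lie in $\I$; I must show $X \in \I$. Consider the family of closed sets $\mathcal{F} = \{\, \I \cap C(F, \emptyset) : F \subseteq X \text{ finite} \,\}$ (intersections of $\I$ with basic closed sets, hence closed in the subspace $\I$). Each such set is nonempty: $F \in \I$ and $F \in C(F,\emptyset)$, so $F$ itself witnesses nonemptiness. The family has the finite intersection property, since for finitely many finite sets $F_1, \dots, F_k \subseteq X$ their union $F = F_1 \cup \dots \cup F_k$ is a finite subset of $X$, lies in $\I$, and lies in each $C(F_i, \emptyset)$. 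By compactness of $\I$ there is a point $Y \in \bigcap \mathcal{F}$. Then $Y \in \I$ and $F \subseteq Y$ for every finite $F \subseteq X$, hence $X \subseteq Y$, and therefore $X \in \I$ by downward closure ($\I = \lceil \I \rceil$). This proves $\I$ is finitary.

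The main subtlety — and the reason the hypothesis $\I = \lceil \I \rceil$ is needed — is the downward-closure step at the end of each direction: the compactness argument only produces a superset of $X$ in $\I$, not $X$ itself, and in the forward direction one needs that a basic neighbourhood determined by a bad finite subset actually avoids all of $\I$. Both uses are immediate given $\I = \lceil \I \rceil$, so there is no real obstacle; the proof is essentially a packaging of the standard equivalence between compactness and the finite intersection property. One should just be slightly careful that the $C(A,B)$ with $B \neq \emptyset$ play no role here — only the "cylinders" $C(F,\emptyset)$ (closed and open) are needed — and that Tychonoff (or, for this product of two-point spaces, a direct ultrafilter argument) supplies compactness of $\P(E)$.
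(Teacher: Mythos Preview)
Your proof is correct and follows essentially the same approach as the paper: show $\I$ is closed via a basic open neighbourhood $C(F,\emptyset)$ around any $X\notin\I$, and for the converse run a finite-intersection-property argument. The only minor difference is that in the second direction the paper uses the full family $C(A,B)$ with $A\subseteq X$ finite and $B\subseteq E\setminus X$ finite, whose intersection is exactly $\{X\}$, thereby obtaining $X\in\I$ directly; your version uses only the sets $C(F,\emptyset)$, lands on some $Y\supseteq X$ in $\I$, and then invokes downward closure---a cosmetic variation rather than a different idea.
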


A standard compactness argument can be used in order to prove \ref{thm:char_fini}.\ref{fini_1}. Here, we employ a slightly less standard argument to prove \ref{thm:char_fini}.\ref{fini_2} as well.
Note that as $\P(E)$ is a compact Hausdorff space, assertion \ref{thm:char_fini}.\ref{fini_2} is equivalent to the assumption that $\I$ is closed in $\P(E)$, which we use quite often in the following proofs.

\begin{proof}[Proof of \autoref{thm:char_fini}]
To deduce \ref{thm:char_fini}.\ref{fini_2} from \ref{thm:char_fini}.\ref{fini_1}, we show that $\I$ is closed.
Let $X\notin \I$.
Since $\I$ is finitary, $X$ has a finite subset $Y\notin \I$ and no superset of $Y$ is in $\I$ as $\I=\lceil \I\rceil$.
Therefore, $C(Y,\emptyset)$ is an open set containing $X$ and avoiding $\I$ and hence $\I$ is closed.

For the converse direction, assume that $\I$ is compact and let $X$ be a set such that all finite subsets of $X$ are in $\I$.
We show $X \in \I$ using the finite intersection property\footnote{\emph{The finite intersection property} ensures that an intersection over a family $\cal C$ of closed sets is non-empty if every intersection of finitely many members of $\cal C$ is.} of $\P(E)$.
Consider the family ${\cal K}$ of pairs $(A,B)$ where $A\subseteq X$ and $B\subseteq E\setminus X$ are both finite.
The set $C(A,B)\cap \I$ is closed for every $(A,B)\in{\cal K}$, as $C(A,B)$ and $\I$ are closed.
If ${\cal L}$ is a finite subfamily of ${\cal K}$, then
\[
\bigcup_{(A,B)\in{\cal L}}A\in \bigcap_{(A,B)\in {\cal L}} \left(C(A,B)\cap\I\right).
\]

As $\P(E)$ is compact, the finite intersection property yields
\[
\left(\bigcap_{(A, B)\in{\cal K}} C(A,B)\right) \cap \I = \bigcap_{(A, B)\in{\cal K}} (C(A,B) \cap \I) \neq \emptyset.
\]
However, $\bigcap_{(A, B)\in{\cal K}} C(A,B)=\{X\}$. Consequently, $X \in \I$, as desired.
\end{proof}

\begin{lem}\label{thm:top_union}
If $\I$ and $\cal J$ are closed in $\P(E)$, then so is $\I\vee {\cal J}$.
\end{lem}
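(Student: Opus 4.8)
\textbf{Proof proposal for \autoref{thm:top_union}.}

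The plan is to exhibit $\I\vee\mathcal J$ as the image of a closed subset of a compact space under a continuous map, and then invoke the fact that a continuous image of a compact set in a Hausdorff space is closed (hence compact). Concretely, I would work in the product space $\P(E)\times\P(E)$ with the product topology; since $\P(E)$ is compact Hausdorff, so is $\P(E)\times\P(E)$. Inside it, consider the set $\mathcal R:=\{(X,Y)\mid X\in\I,\ Y\in\mathcal J\}=\I\times\mathcal J$. As a product of two closed sets, $\mathcal R$ is closed in $\P(E)\times\P(E)$, and therefore compact.

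Next I would define the union map $u\colon\P(E)\times\P(E)\to\P(E)$ by $u(X,Y):=X\cup Y$, and check that it is continuous. For this it suffices to check that the preimage of each subbasic closed set (equivalently, each coordinate map $X\mapsto [e\in X]$) is closed: for a fixed $e\in E$, the set of pairs $(X,Y)$ with $e\in X\cup Y$ is $\{(X,Y)\mid e\in X\}\cup\{(X,Y)\mid e\in Y\}$, a union of two sets that are clopen in $\P(E)\times\P(E)$, hence closed; likewise for $e\notin X\cup Y$. So $u$ is continuous (in fact the coordinates of $u(X,Y)$ each depend continuously on the input). Then $u(\mathcal R)$ is a compact, hence closed, subset of $\P(E)$. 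It remains only to observe that $u(\mathcal R)=\{X\cup Y\mid X\in\I,\ Y\in\mathcal J\}=\I\vee\mathcal J$ by definition of the union of set systems, which finishes the proof.

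The only genuine point requiring care is the continuity of $u$; everything else (products of compact/closed sets, continuous images) is standard. One subtlety worth spelling out: the product topology on $\P(E)$ is the topology of pointwise convergence under the identification $\P(E)\cong\{0,1\}^E$, and under this identification $u$ corresponds to the coordinatewise ``or'' map $\{0,1\}^E\times\{0,1\}^E\to\{0,1\}^E$, which is continuous because each output coordinate factors through the projection to the two relevant input coordinates and the (automatically continuous) map $\{0,1\}\times\{0,1\}\to\{0,1\}$. An alternative, if one prefers to avoid the word ``continuous'' entirely, is a direct net/filter argument: take a net $(X_\alpha\cup Y_\alpha)$ in $\I\vee\mathcal J$ converging to some $Z\in\P(E)$; pass to a subnet so that $(X_\alpha)$ converges to some $X$ and $(Y_\alpha)$ converges to some $Y$ (possible by compactness of $\P(E)$ applied twice), use closedness of $\I$ and $\mathcal J$ to get $X\in\I$, $Y\in\mathcal J$, and then check $X\cup Y=Z$ coordinatewise, since for each $e$ the value $[e\in X_\alpha\cup Y_\alpha]$ is eventually constant and equals $\max([e\in X_\alpha],[e\in Y_\alpha])$. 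Either route is short; I would present the continuous-image version as the cleanest.
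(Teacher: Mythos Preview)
Your proof is correct and follows essentially the same route as the paper: realise $\I\vee\mathcal J$ as the image of the closed (hence compact) set $\I\times\mathcal J\subseteq\P(E)\times\P(E)$ under the union map, verify continuity of that map on subbasic sets, and conclude that the image is compact and therefore closed. The paper's argument differs only cosmetically, computing the preimages of $C(\{a\},\emptyset)$ and $C(\emptyset,\{b\})$ explicitly rather than via the $\{0,1\}^E$ identification.
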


\begin{proof}
Equipping $\P(E) \times \P(E)$ with the product topology, yields that Cartesian products of closed sets in $\P(E)$ are closed in $\P(E) \times \P(E)$.
In particular, $\I \times {\cal J}$ is closed in $\P(E) \times \P(E)$. 
In order to prove that $\I \vee {\cal J}$ is closed, we note that $\I\vee {\cal J}$ is exactly the image of $\I \times {\cal J}$ under the union map
\[f: \P(E) \times \P(E) \to \P(E),\quad f(A,B)= A\cup B.\]
It remains to check that $f$ maps closed sets to closed sets;
which is equivalent to showing that $f$ maps compact sets to compact sets as $\P(E)$ is a compact Hausdorff space.
As continuous images of compact spaces are compact, it suffices to prove that $f$ is continuous, that is, to check that the pre-images of subbase sets $C(\{a\},\emptyset)$ and $C(\emptyset, \{b\})$ are open as can be seen here:
\begin{align*}
f^{-1}(C(\{a\},\emptyset)) &= \left(C(\{a\},\emptyset)\times \P(E)\right) \cup \left(\P(E)\times C(\{a\},\emptyset)\right)\\
f^{-1}(C(\emptyset,\{b\})) &= C(\emptyset,\{b\}) \times C(\emptyset,\{b\}).
\end{align*}
\end{proof}

Next, we prove \autoref{thm:finitary-union}.

\begin{proof}[Proof of \autoref{thm:finitary-union}.]
By \autoref{thm:i3} it remains to show that the union $\I(M_1) \vee \I(M_2)$ satisfies (IM). 
As all finitary set systems satisfy (IM), by Zorn's lemma, it is sufficient to show that $\I(M_1 \vee M_2)$ is finitary. 
By \autoref{thm:char_fini}, $\I(M_1)$ and $\I(M_2)$ are both compact and thus closed in $\P(E)$, yielding, by \autoref{thm:top_union}, that  $\I(M_1) \vee \I(M_2)$ is closed in $\P(E)$, and thus compact.
As $\I(M_1) \vee \I(M_2)=\lceil \I(M_1) \vee \I(M_2) \rceil$, \autoref{thm:char_fini} asserts that $\I(M_1) \vee \I(M_2)$ is finitary, as desired. 
\end{proof}

We conclude this section with the following observation.
\begin{obs}\label{exm:inf_union_fail}
A countable union of finitary matroids need not be a matroid.
\end{obs}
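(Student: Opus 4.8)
The plan is to exhibit, explicitly, a countable family of finitary matroids whose union fails the axiom (IM). Since, as noted above, the union of arbitrarily many matroids satisfies (I3), it suffices to produce such a family for which (IM) fails. I would take the ground set $E$ to be uncountable — say $E=\R$ — and, for every $n\in\N$, let $M_n:=U_{1,\R}$, the $1$-uniform matroid on $\R$ (so $\I(M_n)=\{\emptyset\}\cup\{\{x\}\mid x\in\R\}$). Each $M_n$ is finitary, being uniform of finite rank. Write $M:=\bigvee_{n\in\N}M_n$ for their union, i.e.\ $\I(M)=\{\bigcup_{n\in\N}I_n\mid I_n\in\I(M_n)\text{ for all }n\in\N\}$, the natural extension of \eqref{eq:what-is-union} to countably many matroids.

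The first step is to identify $\I(M)$: it consists precisely of the countable subsets of $\R$. For the inclusion $\subseteq$, any member of $\I(M)$ is a countable union of sets of size at most one, hence countable. For $\supseteq$, the empty set equals $\bigcup_{n\in\N}\emptyset$, and for a nonempty countable $X\subseteq\R$ one fixes a surjection $n\mapsto x_n$ of $\N$ onto $X$ and sets $I_n:=\{x_n\}\in\I(M_n)$, so that $\bigcup_{n\in\N}I_n=X$.

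The second step is to check that this set system violates (IM). Apply (IM) with $I=\emptyset$ and $X=E=\R$: the family $\{I'\in\I(M)\mid\emptyset\subseteq I'\subseteq\R\}$ is the collection of all countable subsets of $\R$, and it has no maximal element, because for any countable $B\subsetneq\R$ and any $x\in\R\setminus B$ the set $B+x$ is still countable and hence lies in $\I(M)$ while properly containing $B$. Therefore $\bigvee_{n\in\N}M_n$ is not a matroid. In particular, since finitary matroids are nearly finitary, this also shows that a countable union of nearly finitary matroids need not be a matroid.

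There is no real obstacle in this argument; the only point deserving attention is that the ground set must be taken uncountable. Over a countable ground set the very same construction would produce the free matroid (every subset would then be a countable union of singletons), so uncountability is exactly what prevents the collection of independent sets from having maximal elements. This causes no tension with \autoref{thm:char_fini} and the fact that finitary set systems satisfy (IM): the set of countable subsets of $\R$ is not a finitary set system — every finite set is countable, yet $\R$ itself is not — so there is nothing to contradict.
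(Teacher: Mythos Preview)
Your argument is correct and essentially identical to the paper's: the paper takes $\bigvee_{n\in\N} U_{k,\R}$ (for arbitrary $k\geq 1$, of which your $k=1$ is a special case), identifies its independent sets as the countable subsets of $\R$, and observes that (IM) fails for $I=\emptyset$, $X=\R$. Your additional remarks on why uncountability is needed and why this does not clash with \autoref{thm:char_fini} are accurate; the paper instead follows the basic example with a second, more delicate construction on a \emph{countable} ground set using finite matroids $U_{1,E_n}$, which you might find worth looking at.
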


\begin{proof}
We show that for any integer $k\geq 1$, the set system 
$$
	\I :=\bigvee_{n\in\N} U_{k,\R}
$$
is not a matroid, where here $U_{k,\R}$ denotes the $k$-uniform matroid with ground set $\R$. 

Since a countable union of finite sets is countable, we have that the members of $\I$ are the countable subsets of $\R$. Consequently, the system $\I$ violates the (IM) axiom for $I=\emptyset$ and $X=\R$. 
\end{proof}

Above, we used the fact that the members of $\I$ are countable and that
the ground set is uncountable. One can have the following more subtle example, showing that a countable union of finite matroids need not be a matroid.
 
Let $A=\{a_1, a_2,\ldots\}$ and $B=\{b_1, b_2, \ldots\}$ be disjoint  countable sets, and for $n\in \N$, set $E_n:=\{a_1,\ldots, a_n\}\cup\{b_n\}$.
Then $\bigvee_{n\in\N} U_{1, E_n}$ is an infinite union of finite matroids and fails to satisfy (IM) for $I=A$ and $X = A\cup B = E(M)$.

\subsection{Nearly finitary matroid union}\label{sec:nearly-union}

In this section, we prove \autoref{thm:nearly-union}. 

For a matroid $M$, let $\I^{\fin}(M)$ denote the set of subsets of $E(M)$ containing no finite circuit of $M$, or equivalently, the set of subsets of $E(M)$ which have all their finite subsets in $\I(M)$.
We call $M^{\fin} = (E(M), \I^{\fin}(M))$ the \emph{finitarization} of $M$.
With this notation, a matroid $M$ is nearly finitary if it has the property that
\begin{equation}
\text{for each $J\in\I(M^{\fin})$ there exists an $I\in\I(M)$ such that $|J\setminus I| < \infty$.}
\end{equation}

For a set system $\I$ (not necessarily the independent sets of a matroid) we call a maximal member of $\I$ a \emph{base} and a minimal member subject to not being in $\I$ a \emph{circuit}.
With these conventions, the notions of \emph{finitarization} and \emph{nearly finitary} carry over to set systems.
\vspace{\baselineskip}

\noindent
Let $\I = \lceil\I\rceil$.
The finitarization $\I^{\fin}$ of $\I$ has the following properties.
\begin{enumerate}
	\item $\I\subseteq \I^{\fin}$ with equality if and only if $\I$ is finitary.
	\item $\I^{\fin}$ is finitary and its circuits are exactly the finite circuits of $\I$.
	\item $(\I|X)^{\fin} = \I^{\fin}|X$, in particular $\I|X$ is nearly finitary if $\I$ is.
\end{enumerate}
The first two statements are obvious.
To see the third, 
assume that $\I$ is nearly finitary and that $J\in \I^{\fin}|X\subseteq \I^{\fin}$.
By definition there is $I\in \I$ such that $J\setminus I$ is finite.
As $J\subseteq X$ we also have that $J\setminus (I\cap X)$ is finite and clearly $I\cap X\in\I|X$.

\begin{pro}\label{thm:finitarization}
The pair $M^{\fin}= (E,\I^{\fin}(M))$ is a finitary matroid, whenever $M$ is a matroid.
\end{pro}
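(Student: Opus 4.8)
The plan is to prove \autoref{thm:finitarization} by verifying the circuit axioms of \cite{matroid_axioms} for the collection $\mathcal D$ of all finite circuits of $M$, rather than checking the independence axioms for $\I^{\fin}(M)$ directly; this is cleaner, since it sidesteps having to manufacture an exchange element for (I3). As already recorded among the properties of finitarization above, the minimal non-members of $\I^{\fin}(M)$ are exactly the members of $\mathcal D$: a set lies in $\I^{\fin}(M)$ precisely when it contains no finite circuit of $M$, so a minimal non-member is itself a finite circuit of $M$, while conversely every finite circuit of $M$ has all of its proper subsets independent in $M$ and hence in $\I^{\fin}(M)$. Thus it suffices to show that $\mathcal D$ satisfies the circuit axioms and that the associated independent sets, which are exactly $\I^{\fin}(M)$, satisfy the maximality axiom.

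The axioms (C1) (no circuit is empty) and (C2) (circuits are pairwise incomparable) are inherited immediately from $\C(M)$. For the circuit elimination axiom, suppose $X\subseteq C$ with $C\in\mathcal D$, that $\{C_x\mid x\in X\}\subseteq\mathcal D$ satisfies $x\in C_y\Leftrightarrow x=y$ for all $x,y\in X$, and that $z\in C\setminus\bigcup_{x\in X}C_x$. Applying axiom (C) of $M$ to these (genuine) circuits of $M$ yields a circuit $C'$ of $M$ with $z\in C'\subseteq\left(C\cup\bigcup_{x\in X}C_x\right)\setminus X$. The one point to observe -- and essentially the only place where anything needs to be said -- is that since $C$ is finite, the index set $X\subseteq C$ is finite and each $C_x$ is finite, so $\left(C\cup\bigcup_{x\in X}C_x\right)\setminus X$ is a finite set; hence $C'$ is a finite circuit of $M$, that is $C'\in\mathcal D$, as required.

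It then remains to check that $\I^{\fin}(M)$ satisfies (IM). For this I would use that $\I^{\fin}(M)$ is a finitary set system: a set $J$ lies in it if and only if every finite subset of $J$ lies in $\I(M)$, since a finite dependent subset would contain a finite circuit of $M$. Consequently, given $I\subseteq X\subseteq E$ with $I\in\I^{\fin}(M)$, the union of any chain in $\{I'\in\I^{\fin}(M)\mid I\subseteq I'\subseteq X\}$ again lies in this family, because any finite subset of the union is contained in a single member of the chain; Zorn's lemma then supplies a maximal element. By the equivalence of the circuit and independence axiom systems established in \cite{matroid_axioms}, $M^{\fin}=(E,\I^{\fin}(M))$ is therefore the independent-set system of a matroid whose circuits are exactly the members of $\mathcal D$; as these are all finite, $M^{\fin}$ is finitary. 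The whole argument is routine bookkeeping, with the only mild subtlety being the finiteness observation used to close the verification of circuit elimination.
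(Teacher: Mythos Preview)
Your proof is correct, but it takes a different route from the paper's. The paper verifies the independence axioms directly: (I1), (I2) and (IM) are immediate (the latter because $\I^{\fin}$ is finitary), and for (I3) it argues by contradiction---assuming no $x\in B\setminus I$ works, it produces finite circuits $C_x\subseteq I+x$ for each such $x$ and a finite circuit $C\subseteq B+y$, then applies the infinite circuit elimination axiom of $M$ to $C$ and $\{C_x:x\in C\cap(B\setminus I)\}$ to obtain a finite circuit inside $I+y$, contradicting $I+y\in\I^{\fin}$. You instead verify the circuit axioms of~\cite{matroid_axioms} for the family $\mathcal D$ of finite $M$-circuits, and check (CM) via Zorn's lemma on the finitary system $\I^{\fin}$. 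Both arguments hinge on the same observation---the circuit produced by elimination lies in a finite union of finite sets and is therefore finite---but you deploy it to confirm (C3) for $\mathcal D$ directly, while the paper deploys it inside a contradiction argument for (I3). Your route is arguably cleaner here, since it avoids having to set up the (I3) configuration and lets the equivalence of axiom systems do the bookkeeping; the paper's route, on the other hand, is self-contained in the independence-axiom framework already used throughout the section.
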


\begin{proof}
By construction, the set system $\I^{\fin} = \I(M^{\fin})$ satisfies the axioms (I1) and (I2) and is finitary, implying that it also satisfies (IM).

It remains to show that $\I^{\fin}$ satisfies $(I3)$.
By definition, a set $X\subseteq E(M)$ is not in $\I^{\fin}$ if and only if it contains a finite circuit of $M$.

Let $B, I \in \I^{\fin}$ where $B$ is maximal and $I$ is not, and 
let $y\in E(M)\setminus I$ such that $I+y\in \I^{\fin}$.
If $I+x\in\I^{\fin}$ for any $x\in B\setminus I$, then we are done. 

Assuming the contrary, 
then $y\notin B$ and for any $x\in B\setminus I$ there exists a finite circuit $C_x$ of $M$ in $I+x$ containing $x$.
By maximality of $B$, there exists a finite circuit $C$ of $M$ in $B+y$ containing $y$.
By the circuit elimination axiom (in $M$) applied to the circuits $C$ and  $\{C_x\}_{x\in X}$ where $X:= C\cap (B\setminus I)$, there exists a circuit 
\[D\subseteq \left(C\cup\bigcup_{x\in X} C_x\right) \setminus X\subseteq I + y\]
of $M$ containing $y\in C\setminus \bigcup_{x\in X} C_x$.
The circuit $D$ is finite, since the circuits $C$ and $\{C_x\}$ are; this contradicts $I+y\in \I^{\fin}$.
\end{proof}

\begin{pro}\label{thm:fin-equal}
For arbitrary matroids $M_1$ and $M_2$ it holds that
\[{\cal I}(M^{\fin}_1 \vee M^{\fin}_2)= {\cal I}(M^{\fin}_1 \vee M^{\fin}_2)^{\fin}= {\cal I}(M_1 \vee M_2)^{\fin}.\]
\end{pro}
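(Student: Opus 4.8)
The plan is to split the double equality into its two halves and handle each separately. For the first equality $\I(M_1^{\fin}\vee M_2^{\fin}) = \I(M_1^{\fin}\vee M_2^{\fin})^{\fin}$, I would first invoke \autoref{thm:finitarization} to see that $M_1^{\fin}$ and $M_2^{\fin}$ are finitary matroids, and then \autoref{thm:finitary-union} to conclude that $M_1^{\fin}\vee M_2^{\fin}$ is itself a finitary matroid. Since a subset-closed set system coincides with its own finitarization precisely when it is finitary (the first of the three properties of finitarization recorded above), the first equality follows at once.

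For the second equality $\I(M_1^{\fin}\vee M_2^{\fin}) = \I(M_1\vee M_2)^{\fin}$ I would prove the two inclusions. One first records that $\I(M_1\vee M_2)$ is closed under taking subsets (if $I=I_1\cup I_2$ with $I_i\in\I(M_i)$ and $I'\subseteq I$ then $I'=(I'\cap I_1)\cup(I'\cap I_2)$), so that its finitarization is well defined. For the inclusion $\I(M_1\vee M_2)^{\fin}\subseteq\I(M_1^{\fin}\vee M_2^{\fin})$: from $\I(M_i)\subseteq\I(M_i^{\fin})$ we get $\I(M_1\vee M_2)\subseteq\I(M_1^{\fin}\vee M_2^{\fin})$; applying the (evidently monotone) finitarization operation and using that the right-hand system is already finitary yields $\I(M_1\vee M_2)^{\fin}\subseteq\I(M_1^{\fin}\vee M_2^{\fin})^{\fin}=\I(M_1^{\fin}\vee M_2^{\fin})$. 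For the reverse inclusion: given $S\in\I(M_1^{\fin}\vee M_2^{\fin})$, fix a representation $S=S_1\cup S_2$ with $S_i\in\I(M_i^{\fin})$; for an arbitrary finite $F\subseteq S$ the set $F\cap S_i$ is a finite subset of $S_i$ and therefore lies in $\I(M_i)$, since $S_i$ contains no finite circuit of $M_i$; hence $F=(F\cap S_1)\cup(F\cap S_2)\in\I(M_1\vee M_2)$, and as $F$ was an arbitrary finite subset of $S$ this shows $S\in\I(M_1\vee M_2)^{\fin}$.

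I do not anticipate a genuine obstacle here: the whole argument is a careful chaining of the definitions, the monotonicity of finitarization, and the observation that the finite subsets of a set containing no finite circuit are actually independent. The only points deserving a moment's attention are that $\I(M_1\vee M_2)$, which need not be a matroid, is nonetheless subset-closed (so that $\I(M_1\vee M_2)^{\fin}$ is meaningful), and that \autoref{thm:finitary-union} — though stated before it is proved — is available by the time this proposition is applied.
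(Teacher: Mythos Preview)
Your proof is correct and follows essentially the same approach as the paper's: both establish the first equality by combining \autoref{thm:finitarization} with \autoref{thm:finitary-union}, and both reduce the second equality to the observation that the finite members of $\I(M_1^{\fin}\vee M_2^{\fin})$ and $\I(M_1\vee M_2)$ coincide (the paper phrases this as ``same finite members $\Rightarrow$ same finitarization'', while you argue the two inclusions directly, but the underlying content is identical).
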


\begin{proof}
By \autoref{thm:finitarization}, the matroids $M_1^{\fin}$ and $M_2^{\fin}$ are finitary  and therefore $M^{\fin}_1 \vee M^{\fin}_2$ is a finitary as well, by \autoref{thm:finitary-union}.
This establishes the first equality.

The second equality follows from the definition of finitarization provided we show that the finite members of ${\cal I}(M^{\fin}_1 \vee M^{\fin}_2)$ and ${\cal I}(M_1 \vee M_2)$ are the same.

Since $\I(M_1)\subseteq \I(M_1^{\fin})$ and $\I(M_2)\subseteq \I(M_2^{\fin})$ it holds that ${\cal I}(M^{\fin}_1 \vee M^{\fin}_2)\supseteq {\cal I}(M_1 \vee M_2)$.
On the other hand, a finite set $I\in {\cal I}(M^{\fin}_1 \vee M^{\fin}_2)$ can be written as $I= I_1\cup I_2$ with $I_1\in {\cal I}(M^{\fin}_1)$ and $I_2\in {\cal I}(M^{\fin}_2)$ finite.
As $I_1$ and $I_2$ are finite, $I_1\in {\cal I}(M_1)$ and $I_2\in {\cal I}(M_2)$, implying that $I\in {\cal I}(M_1 \vee M_2)$.
\end{proof}

With the above notation a matroid $M$ is nearly finitary if each base of $M^{\fin}$ contains a base of $M$ such that their difference is finite. The following is probably the most natural manner to construct nearly finitary matroids (that are not finitary) from finitary matroids.

For a matroid $M$ and an integer $k\geq 0$, set $M[k]:= (E(M), \I[k])$, where
\[\I[k]:= \{I\in \I(M)\ |\ \exists J \in \I(M)\text{ such that }I \subseteq J\text{ and }|J \setminus I| = k\}.\]

\begin{pro}\label{thm:fin-to-nearly}
If $\rank(M)\geq k$, then $M[k]$ is a matroid.
\end{pro}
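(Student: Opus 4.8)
The plan is to verify the matroid axioms (I1), (I2), (IM), (I3) for $\I[k]$ directly. Axioms (I1) and (I2) are immediate: $\emptyset \in \I[k]$ because $\rank(M) \geq k$ guarantees some independent set of size $k$; and if $I \in \I[k]$ is witnessed by $J \supseteq I$ with $|J \setminus I| = k$, then for any $I' \subseteq I$ we may pass to a subset of $J$ of the form $I' \cup (J\setminus I)$, which is independent and witnesses $I' \in \I[k]$. For (IM), given $I \subseteq X \subseteq E$ with $I \in \I[k]$, one takes a maximal element of $\I(M)$ between $I$ and $X$ — call it $I^\ast$ — using that $M$ itself satisfies (IM); then $I^\ast$, together with a choice of $k$ further independent elements extending it (available since $\rank(M)\geq k$... but note these need not lie in $X$), shows the relevant collection is nonempty, and a Zorn-type argument inside $\{I' \in \I[k] \mid I \subseteq I' \subseteq X\}$ — or better, a direct argument that a maximal $\I(M)$-independent subset of $X$ containing $I$ is also $\I[k]$-maximal — finishes it. The cleanest route is probably: observe that $I' \in \I[k]$ with $I' \subseteq X$ is $\I[k]$-maximal in $X$ iff $I'$ extends to $J' \in \I(M)$ with $|J' \setminus I'| = k$ and $I'$ is $\I(M)$-maximal in $X$; then apply (IM) of $M$.

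The substantive axiom is (I3). Here I would take $I, B \in \I[k]$ with $B$ maximal in $\I[k]$ and $I$ not maximal, and seek $x \in B \setminus I$ with $I + x \in \I[k]$. Let $J_I \supseteq I$ and $J_B \supseteq B$ be the $\I(M)$-independent witnesses, each with a $k$-element "surplus" $S_I := J_I \setminus I$, $S_B := J_B \setminus B$. The idea is that $B$ being $\I[k]$-maximal forces $J_B = B \cup S_B$ to be close to $\I(M)$-maximal — in fact $\I[k]$-maximality of $B$ should imply $J_B$ is a base of $M$ (if $J_B$ were extendable by some $e$, then $B + e \in \I[k]$, contradicting maximality unless $e \in B$, which is impossible). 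Similarly $I$ non-maximal in $\I[k]$ means either $J_I$ is non-maximal in $M$ or $J_I$ is a base but $I$ can still absorb one unit of its surplus plus grow — I'd need to unwind this carefully. Then I would apply (I3) (or rather the exchange property / the fact that any two bases of $M$ have the "same size" in the relative-rank sense) to the pair $J_I$, $J_B$ inside $M$ to move an element of $J_B \setminus J_I$ into $J_I$, and then bookkeep which of the resulting $k+1$ or more surplus elements to designate as the new surplus so that the grown set still has exactly a $k$-element independent extension and genuinely contains a new element of $B \setminus I$.

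The main obstacle I anticipate is precisely this bookkeeping in (I3): controlling the interaction between the "real" part $I$ (resp. $B$) and the size-$k$ surplus, and ensuring the element we add lies in $B \setminus I$ rather than being wasted inside the surplus or inside $I \cap B$. A natural way to tame it is to reduce to the case $|E(M) \setminus J_B|$-type considerations or to argue via the dual / via relative rank: show $I \in \I[k]$ iff $\rank(E \mid E \setminus I') \geq$ something, i.e., realize $\I[k]$ as independent sets of a known matroid operation (truncation-like: $M[k]$ should be obtained from $M$ by "co-extending" or by adding $k$ free elements in parallel and then deleting them, or by a dual truncation). Indeed $M[k]$ looks like it should equal $(M^\ast \text{ truncated by } k)^\ast$ restricted appropriately, or the matroid whose bases are the $(\rank M - k)$-element... — pinning down such an identification, if it goes through for infinite matroids, would make the whole proposition a one-line consequence of the fact that truncations and duals of matroids are matroids. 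I would try that identification first; if the infinite subtleties obstruct it, I would fall back on the direct axiom verification sketched above, spending the bulk of the argument on (I3).
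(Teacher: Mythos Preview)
Your outline has the right shape, but there are two genuine gaps.

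\textbf{(IM).} Your proposed ``cleanest route'' rests on the equivalence: for $I'\in\I[k]$ with $I'\subseteq X$, the set $I'$ is $\I[k]$-maximal in $X$ if and only if $I'$ is $\I(M)$-maximal in $X$. The $\Leftarrow$ direction is fine, but $\Rightarrow$ is false. Take $M=U_{2,3}$, $k=1$, $X=E$: then $\I[1]$ consists of the empty set and the singletons, so $\{a\}$ is $\I[1]$-maximal in $X$ yet certainly not $\I(M)$-maximal. The Zorn suggestion does not obviously rescue this either, since $\I[k]$ is not evidently closed under unions of chains (the size-$k$ witnesses for the chain members need not cohere). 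The paper's fix is to show, via a finite symmetric-difference inequality (\autoref{thm:sym-dif}: if $B$ is a base and $B\setminus I$ is finite then $|I\setminus B|\le|B\setminus I|$), that any $I'\in\I[k]$ with $I\subseteq I'\subseteq X$ satisfies $|I'\setminus I|\le k$ once one has fixed an $\I(M)$-maximal $B$ in $X$ above $I$; hence greedily adding elements terminates after at most $k$ steps.

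\textbf{(I3).} You correctly argue that $J_B$ is a base of $M$ and correctly identify the obstacle: applying (I3) in $M$ to $J_I$ and $J_B$ may hand you an element of the surplus $S_B=J_B\setminus B$ rather than of $B\setminus I$. What you are missing is the trick that resolves it. First one shows (again via \autoref{thm:sym-dif}) that $J_I=I\cup S_I$ is non-maximal in $\I(M)$ for \emph{every} admissible choice of surplus $S_I$. One is therefore free to choose $S_I$ so as to maximise $|S_I\cap S_B|$. With this choice, the element $x\in J_B\setminus J_I$ produced by (I3) in $M$ cannot lie in $S_B$: if it did, then $(S_I\setminus\{\text{anything}\})\cup\{x\}$---more precisely, replacing $S_I$ by a $k$-subset of $S_I+x$ containing $x$---would yield an admissible surplus with strictly larger overlap with $S_B$, contradicting the choice of $S_I$. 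Hence $x\in B\setminus I$ and $I+x\in\I[k]$.

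The alternative route via identifying $M[k]$ with a (co-)truncation is reasonable for finite matroids (where $M[k]$ is the $k$-fold truncation), but for infinite $M$ one would still need to prove that this operation yields a matroid, which is essentially the present proposition.
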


\begin{proof}
The axiom (I1) holds as $\rank(M)\geq k$; the axiom (I2) holds as it does in $M$.
For (I3) let $I',I\in \I(M[k])$ such that $I'$ is maximal and $I$ is not.
There is a set $F'\subseteq E(M)\setminus I'$ of size $k$ such that, in $M$, the set $I'\cup F'$ is not only independent but, by maximality of $I'$, also a base.
Similarly, there is a set $F\subseteq E(M)\setminus I$ of size $k$ such that $I\cup F\in\I(M)$.

We claim that $I\cup F$ is non-maximal in $\I(M)$ for any such $F$.
Suppose not and $I\cup F$ is maximal for some $F$ as above.
By assumption, $I$ is contained in some larger set of $\I(M[k])$.
Hence there is a set $F^+\subseteq E(M)\setminus I$ of size $k+1$ such that $I\cup F^+$ is independent in $M$.
Clearly $(I\cup F) \setminus (I\cup F^+) = F\setminus F^+$ is finite, so \autoref{thm:sym-dif} implies that
\[\left|F^+\setminus F\right| = \left|(I\cup F^+)\setminus (I\cup F)\right|\leq \left|(I\cup F) \setminus (I\cup F^+)\right| = \left|F\setminus F^+\right|.\]
In particular, $k +1 = |F^+|\leq |F| = k$, a contradiction.

Hence we can pick $F$ such that $F\cap F'$ is maximal and, as $I\cup F$ is non-maximal in $\I(M)$, apply (I3) in $M$ to obtain a $x\in (I'\cup F') \setminus (I\cup F)$ such that $(I\cup F) + x \in \I(M)$.
This means $I + x\in\I(M[k])$.
And $x\in I'\setminus I$ follows, as $x\notin F'$ by our choice of $F$.

To show (IM), let $I\subseteq X\subseteq E(M)$ with $I\in \I(M[k])$ be given.
By (IM) for $M$, there is a $B\in\I(M)$ which is maximal subject to $I\subseteq B\subseteq X$.
We may assume that $F:=B\setminus I$ has at most $k$ elements;
for otherwise there is a superset $I'\subseteq B$ of $I$ such that $|B\setminus I'|=k$ and it suffices to find a maximal set containing $I'\in\I(M[k])$ instead of $I$.

We claim that for any $F^+\subseteq X\setminus I$ of size $k+1$ the set $I\cup F^+$ is not in $\I(M[k])$.
For a contradiction, suppose it is.
Then in $M|X$, the set $B = I\cup F$ is a base and $I\cup F^+$ is independent and as $(I\cup F)\setminus (I\cup F^+)\subseteq F\setminus F^+$ is finite, \autoref{thm:sym-dif} implies
\[\left|F^+\setminus F\right| = \left|(I\cup F^+)\setminus (I\cup F)\right|\leq \left|(I\cup F)\setminus (I\cup F^+)\right| = \left|F\setminus F^+\right|.\]
This means $k +1 = |F^+|\leq |F| = k$, a contradiction.
So by successively adding single elements of $X\setminus I$ to $I$ as long as the obtained set is still in $\I(M[k])$ we arrive at the wanted maximal element after at most $k$ steps.
\end{proof}

We conclude this section with a proof of \autoref{thm:nearly-union}. To this end,
we shall require following two lemmas. 

\begin{lem}\label{thm:sym-dif}
Let $M$ be a matroid and $I, B\in\I(M)$ with $B$ maximal and $B\setminus I$ finite. Then, $|I\setminus B| \leq |B\setminus I|$.
\end{lem}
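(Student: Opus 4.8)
The statement is a "near-exchange" bound: if $B$ is a maximal independent set of a matroid $M$ and $I$ is independent with $B\setminus I$ finite, then $|I\setminus B|\le|B\setminus I|$. The plan is to reduce to the finite case by restricting attention to a suitable finite contraction. First I would handle the trivial cases: if $I\setminus B$ is finite, then $I\cup B$ lies in the interval between $B$ and the whole ground set, so by (IM) there is a maximal independent set $B'$ with $B\subseteq B'$, which by maximality of $B$ must equal $B$; hence $I\subseteq B$ and $|I\setminus B|=0$. Wait — more carefully, $B$ maximal means $B$ is maximal in all of $\I(M)$, so any independent superset of $B$ equals $B$; applying (IM) to $B\subseteq B\cup I\subseteq E$ gives a maximal element $B'$ of $\{B''\in\I(M)\mid B\subseteq B''\subseteq B\cup I\}$, and then (I3) forces $B'$ to be maximal in $\I(M)$ hence $B'=B$, so $I\subseteq B$. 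So in fact the interesting content is only when $I\setminus B$ is infinite, and the claim is that this cannot happen when $B\setminus I$ is finite — or rather that the inequality still holds, which would be false if $|I\setminus B|=\infty$. So the real assertion is: $B\setminus I$ finite forces $I\setminus B$ finite, with the stated bound.

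The main idea: let $F:=B\setminus I$, a finite set. Work in the contraction $N:=M/I_0$ where $I_0:=I\cap B$ (the common part). In $N$, the set $F$ is independent (since $I_0\cup F=B$ is independent), and $F$ spans $N$ because $B=I_0\cup F$ is a base of $M$, so $F$ is a \emph{base} of $N$ and $\rank(N)=|F|<\infty$. On the other hand $I\setminus B=I\setminus I_0$ is independent in $N$ (again because $I$ is independent in $M$ and contains $I_0$). A finite-rank matroid is finitary, and in any matroid every independent set has size at most the rank when the rank is finite — more precisely, an independent set in a matroid of finite rank $r$ has at most $r$ elements, since otherwise it would contain an independent subset of size $r+1$, contradicting that $F$ (of size $r$) is maximal via repeated application of (I3) restricted appropriately. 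Hence $|I\setminus B|=|I\setminus I_0|\le\rank(N)=|F|=|B\setminus I|$, which is exactly the claim.

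\textbf{Key steps in order.} (1) Set $I_0=I\cap B$, $F=B\setminus I$; note $B=I_0\,\dot\cup\,F$ and $F$ finite. (2) Verify that contracting by $I_0$ is legitimate and that in $N:=M/I_0$, $F$ is a base: independence of $F$ in $N$ follows from independence of $I_0\cup F=B$ in $M$; maximality follows from maximality of $B$ in $M$. Thus $\rank(N)\le|F|$. (3) Show $I\setminus I_0$ is independent in $N$: it equals $I\setminus B$ modulo the fact that $I_0\subseteq I$ and $I$ is $M$-independent, so $(I\setminus I_0)\cup I_0=I\in\I(M)$ gives $I\setminus I_0\in\I(M/I_0)$. (4) In a matroid of finite rank $r$, every independent set has $\le r$ elements: if some independent $J$ had $r+1$ elements, apply (I3) with $I'=F$ (maximal, size $r$) and any non-maximal $r$-subset of $J$ to build an independent set of size $r+1$ containing... actually the cleanest route is: $F$ is maximal, so by (I3) any independent set not of maximum size can be augmented from $F$; iterating, every independent set embeds into one of size $|F|$, forcing $|J|\le|F|$. (5) Apply (4) to $J=I\setminus I_0$ in $N$: $|I\setminus B|=|I\setminus I_0|\le\rank(N)\le|F|=|B\setminus I|$.

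\textbf{Main obstacle.} The delicate point is step (2): one must make sure that contraction $M/I_0$ is a well-defined matroid (this is part of the infinite matroid framework of \cite{matroid_axioms}, so I would cite it) and that the independent sets of $M/I_0$ are exactly the sets $S\subseteq E\setminus I_0$ with $S\cup I_0'\in\I(M)$ for some/any basis $I_0'$ of $I_0$ — here $I_0$ is itself independent so this reads $S\cup I_0\in\I(M)$. Once that description is in hand, (2) and (3) are immediate. Alternatively, to avoid invoking contraction machinery altogether, I could argue directly inside $M$: build a maximal independent set $B^\ast$ with $I\subseteq B^\ast$, show $B^\ast\setminus I$ and $B\setminus I$ relate via the exchange axiom, and count; but this is essentially re-proving the finite-rank bound by hand and the contraction route is cleaner. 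Either way the crux is the elementary fact that finite maximal independent sets bound the size of all independent sets, which is just finitely many applications of (I3).
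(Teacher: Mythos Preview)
Your approach is correct but differs from the paper's. The paper proceeds by induction on $|B\setminus I|$: given $y\in B\setminus I$, either $I+y$ is independent and the inductive hypothesis applies to the pair $(I+y,B)$, or the unique circuit of $M$ in $I+y$ contains some $x\in I\setminus B$ and the inductive hypothesis applies to $((I+y)-x,B)$; in either case the count drops appropriately. This is entirely self-contained, using only the existence and uniqueness of the fundamental circuit. Your argument instead contracts $I_0=I\cap B$, so that in $N=M/I_0$ the set $F=B\setminus I$ becomes a finite base while $I\setminus B$ remains independent; the inequality then reduces to the standard fact that in a matroid with a finite base every independent set has at most that many elements. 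This is conceptually clean and makes the lemma an instance of a general principle, at the cost of invoking more machinery from~\cite{matroid_axioms} (that contraction yields a matroid, and equicardinality of bases when one is finite).

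One caveat on step~(4): your justification via ``finitely many applications of (I3)'' is not quite complete. Augmenting a non-maximal $J$ from $F$ via (I3) produces a base \emph{containing} $J$, which by itself does not bound $|J|$. What actually works is base exchange (B2) iterated from $F$ toward a base $B'\supseteq J$ (each step preserves $|F|$ and decreases $|F\setminus B'|$, forcing $|B'|=|F|$), or alternatively restricting to the finite set $F\cup J'$ for an $(|F|{+}1)$-subset $J'\subseteq J$ and appealing to finite matroid theory. Either fix is routine. (The ``trivial cases'' discussion in your Plan is confused---finiteness of $I\setminus B$ certainly does not force $I\subseteq B$---but you abandon it and it plays no role in the actual proof.)
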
 

\begin{proof}
The proof is by induction on $|B\setminus I|$.
For $|B\setminus I| = 0$ we have $B\subseteq I$ and hence $B = I$ by maximality of $B$.
Now suppose there is $y\in B\setminus I$.
If $I+y\in \I$ then by induction
\[|I\setminus B| = |(I+y)\setminus B|\leq |B\setminus (I+y)| = |B\setminus I| - 1\]
and hence $|I\setminus B| < |B\setminus I|$.
Otherwise there exists a unique circuit $C$ of $M$ in $I+y$.
Clearly $C$ cannot be contained in $B$ and therefore has an element $x\in I\setminus B$.
Then $(I+y)-x$ is independent, so by induction
\[|I\setminus B|  - 1 = |((I+y)- x)\setminus B|\leq |B\setminus ((I+y) - x)| = |B\setminus I| - 1,\]
and hence $|I\setminus B| \leq |B\setminus I|$.
\end{proof}

\begin{lem}\label{thm:nearlyfinitaryim}
	Let $\I\subseteq \P(E)$ be a nearly finitary set system satisfying (I1), (I2), and the following variant of (I3):
	\begin{itemize}
		\item[(*)] For all $I, J\in\I$ and all $y\in I\setminus J$ with $J+y\notin\I$ there exists $x\in J\setminus I$ such that $(J + y) - x\in\I$.
	\end{itemize}
	Then $\I$ satisfies (IM).
\end{lem}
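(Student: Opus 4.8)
The plan is to reduce to the case $X=E$ and then, for a given $I_0\in\I$, to construct a maximal element of $\I$ above $I_0$ by minimising a ``deficiency'' measured against the finitarization $\I^{\fin}$, which serves as a scaffold because it satisfies (IM) for free (being finitary), while the axiom (*) is used to perform the necessary local surgery.

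\paragraph{Reduction and set-up.}
Since $\I|X$ inherits (I1), (I2) and (*) (the witnesses produced by (*) for $\I$ lie inside $X$ whenever the input does), and $\I|X$ is again nearly finitary because $(\I|X)^{\fin}=\I^{\fin}|X$, it suffices to produce, for every $I_0\in\I$, a maximal element of $\I$ containing $I_0$. Because $\I^{\fin}$ is finitary it satisfies (IM), so I would extend $I_0$ to a maximal element $B$ of $\I^{\fin}$; by near-finitariness together with (I2) there is $K\in\I$ with $K\subseteq B$ and $B\setminus K$ finite; and at most $|I_0\setminus K|$ applications of (*) — for each $y\in I_0\setminus K$, either enlarge $K$ to $K+y$ inside $B$ if $K+y\in\I$, or use (*) with the pair $(I_0,K)$ to exchange some $x\in K\setminus I_0$ for $y$, in either case keeping $K\subseteq B$ and not increasing $|B\setminus K|$ — let me further arrange $I_0\subseteq K$. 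Among all triples $(B,K)$ of this kind ($B$ maximal in $\I^{\fin}$, $K\in\I$, $I_0\subseteq K\subseteq B$, $B\setminus K$ finite) I would fix one minimising $d:=|B\setminus K|$.

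\paragraph{Main argument.}
I claim this $K$ is a maximal element of $\I$. If $d=0$ then $K=B\in\I$ is a maximal element of $\I^{\fin}\supseteq\I$, hence maximal in $\I$, and contains $I_0$. If $d\geq 1$ and $K$ were not maximal in $\I$, pick $e\notin K$ with $K+e\in\I$. Then $e\notin B$, for otherwise $(B,K+e)$ would be a triple of deficiency $d-1$, contradicting minimality. Since $B$ is a maximal element of $\I^{\fin}$, the set $B+e$ contains a finite $\I$-circuit $D$, which must contain $e$ (as $B\in\I^{\fin}$); and $D\not\subseteq K+e$ (else $K+e\notin\I$), so $D$ meets $B\setminus K$, say in $f$. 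Put $B':=(B+e)-f$: one checks $K+e\subseteq B'$ and, counting, $B'\setminus(K+e)=(B\setminus K)\setminus\{f\}$, so $|B'\setminus(K+e)|=d-1$. Thus $(B',K+e)$ would be a triple of deficiency $d-1$, a contradiction. Hence $K$ is maximal in $\I$ and contains $I_0$, proving (IM).

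\paragraph{The main obstacle.}
The step that carries the weight is the assertion that $B'=(B+e)-f$ is again a maximal element of $\I^{\fin}$ — equivalently, that $D$ is the \emph{unique} finite $\I$-circuit contained in $B+e$ and that deleting its element $f$ creates no room to add an element outside $B$. This is exactly matroidal base exchange for $\I^{\fin}$: it is immediate when $\I^{\fin}$ happens to be a matroid, which is precisely the situation in which this lemma is applied, where $\I=\I(M_1\vee M_2)$ and hence $\I^{\fin}=\I(M_1^{\fin}\vee M_2^{\fin})$ by \autoref{thm:fin-equal}, a finitary matroid. In the stated generality one instead argues that \autoref{thm:sym-dif}, together with the exchange step behind it, goes through verbatim for any set system satisfying (I1), (I2) and (*), with (*) supplying the exchange move that the fundamental-circuit argument provided in the matroid case; this is where the weak exchange axiom is genuinely used. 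I expect getting the finiteness bookkeeping right — so that this symmetric-difference estimate legitimately applies between the two maximal elements $B,B'$ of $\I^{\fin}$, yielding the strict drop in deficiency — to be the delicate point; the remainder is routine.
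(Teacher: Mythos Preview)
Your setup mirrors the paper's exactly: both extend $I_0$ to a maximal $B\in\I^{\fin}$, invoke near-finitariness to find $K\in\I$ with $B\setminus K$ finite, and use iterated applications of (*) to arrange $I_0\subseteq K$. The finishing moves differ. The paper keeps a single $B^{\fin}$ fixed and, for any $J'\in\I$ with $J'\supseteq K$ (not requiring $J'\subseteq B^{\fin}$), applies \autoref{thm:sym-dif} in $\I^{\fin}$ to get $|J'\setminus B^{\fin}|\leq|B^{\fin}\setminus J'|\leq|B^{\fin}\setminus K|$, hence $|J'\setminus K|\leq 2|B^{\fin}\setminus K|$; greedy extension of $K$ inside $X$ then terminates. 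You instead let $B$ vary, minimise $d=|B\setminus K|$ over all admissible pairs, and show the minimiser is already maximal by a base-exchange step in $\I^{\fin}$. Both are valid once one grants that $\I^{\fin}$ is a matroid, and the paper simply asserts this (``As $\I^{\fin}$ is a matroid, \autoref{thm:sym-dif} implies\ldots''). Your route is a touch more direct, producing the maximal element outright; the paper's buys a little more, since it needs only the cardinality estimate of \autoref{thm:sym-dif} rather than full base exchange (unique fundamental circuit plus maximality of $B'=(B+e)-f$).

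One clarification on your final paragraph: the obstacle is not whether \autoref{thm:sym-dif} holds for systems satisfying (*); it is whether $\I^{\fin}$---not $\I$---enjoys unique-circuit and base-exchange behaviour. The axiom (*) is an exchange property of $\I$, and there is no evident way to transport it to $\I^{\fin}$. The paper does not attempt this either; its assertion that $\I^{\fin}$ is a matroid is, strictly speaking, not derived from the stated hypotheses of the lemma, but is justified in the only application via \autoref{thm:fin-equal} and \autoref{thm:finitary-union}, exactly as you note.
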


\begin{proof}
Let $I\subseteq X\subseteq E$ with $I\in\I$.
As $\I^{\fin}$ satisfies (IM) there is a set $B^{\fin}\in\I^{\fin}$ which is maximal subject to $I\subseteq B^{\fin}\subseteq X$ and being in $\I^{\fin}$.
As $\I$ is nearly finitary, there is $J\in\I$ such that $B^{\fin}\setminus J$ is finite and we may assume that $J\subseteq X$.
Then, $I\setminus J\subseteq B^{\fin}\setminus J$ is finite so that we may choose a $J$ minimizing $|I\setminus J|$. 
If there is a $y\in I\setminus J$, then by (*) we have $J+y\in \I$ or there is an $x\in J\setminus I$ such that $(J+y)-x\in\I$.
Both outcomes give a set containing more elements of $I$ and hence contradicting the choice of $J$.

It remains to show that $J$ can be extended to a maximal set $B$ of $\I$ in~$X$.
For any superset $J'\in\I$ of $J$, we have $J'\in\I^{\fin}$ and $B^{\fin}\setminus J'$ is finite as it is a subset of $B^{\fin}\setminus J$.
As $\I^{\fin}$ is a matroid, \autoref{thm:sym-dif} implies
\[|J'\setminus B^{\fin}|\leq|B^{\fin}\setminus J'| \leq |B^{\fin}\setminus J|.\]
Hence, $|J'\setminus J| \leq 2|B^{\fin}\setminus J| < \infty$.
Thus, we can greedily add elements of $X$ to $J$ to obtain the wanted set $B$ after finitely many steps.
\end{proof}

Next, we prove \autoref{thm:nearly-union}. 

\begin{proof}[Proof of \autoref{thm:nearly-union}]
By \autoref{thm:i3'}, in order to prove that $M_1\vee M_2$ is a matroid, it is sufficient to prove that $\I(M_1\vee M_2)$ satisfies (IM).
By \autoref{thm:b2*} and \autoref{thm:nearlyfinitaryim} it remains to show that $\I(M_1\vee M_2)$ is nearly finitary.

So let $J\in \I(M_1 \vee M_2)^{\fin}$.
By \autoref{thm:fin-equal} we may assume that $J=J_1\cup J_2$ with $J_1\in \I(M_1^{\fin})$ and $J_2\in \I(M_2^{\fin})$.
By assumption there are $I_1\in \I(M_1)$ and $I_2\in\I(M_2)$ such that $J_1\setminus I_1$ and $J_2\setminus I_2$ are finite.
Then $I=I_1\cup I_2\in\I(M_1 \vee M_2)$ and the assertion follows as $J\setminus (I_1 \cup I_2)\subseteq (J_1\setminus I_1) \cup (J_2\setminus I_2)$ is finite.
\end{proof}

\subsubsection{Unions of non-nearly finitary matroids}\label{sec:non-nearly}

In this section, we prove \autoref{thm:tightness} asserting that a certain family of non-nearly finitary matroids does not admit a union theorem.

A matroid $N$ is non-nearly finitary provided it has a set $I\in\I(N^{\fin})$ with the property that no finite subset of $I$ meets all the necessarily infinite circuits of $N$ in $I$.
If we additionally assume that there is one such $I$ which contains only countably many circuits, then there exists a finitary matroid $M$ such that $\I(M\vee N)$ is not a matroid.

\begin{proof}[Proof of \autoref{thm:tightness}]
For $N$ and $I$ as in \autoref{thm:tightness} choose an enumeration $C_1, C_2, \ldots$ of the circuits of $N$ in $I$.
We may assume that $I = \bigcup_{n\in\N} C_n$.
There exist countably many disjoint subsets $Y_1, Y_2,\ldots$ of $I$ satisfying
\begin{enumerate}
	\item $|Y_n|\leq n$ for all $n\in\N$; and
	\item $Y_n\cap C_i\neq\emptyset$ for all $n\in\N$ and all $1\leq i\leq n$.
\end{enumerate}

We construct the above sets as follows. 
Suppose $Y_1, \ldots, Y_n$ have already been defined.
Let $Y_{n+1}$ be a set of size at most $n+1$ disjoint to each of $Y_1, \ldots, Y_n$ and meeting the circuits $C_1, \ldots, C_{n+1}$;
such exists as $\bigcup_{i=1}^n Y_i$ is finite and all circuits in $I$ are infinite.

Let $L = \{l_1, l_2, \ldots\}$ be a countable set disjoint from $E(N)$.
For each $n\in\N$ let $M_n$ be the $1$-uniform matroid on $Y_n\cup \{l_n\}$, i.e.\ $M_n := U_{1, Y_n\cup \{l_n\}}$.
Then, $M := \bigoplus_{n\in\N} M_n$ is a direct sum of finite matroids and hence finitary.

We contend that $I\in \I(M\vee N)$ and that $\I(M\vee N)$ violates (IM) for $I$ and $X := I\cup L$.
By construction, $Y_n$ contains some element $d_n$ of $C_n$, for every $n\in\N$.
So that $J_M = \{d_1, d_2, \ldots\}$ meets every circuit of $N$ in $I$ and is independent in $M$.
This means that $J_N := I\setminus J_M \in\I(N)$ and thus $I = J_M\cup J_N\in\I(M\vee N)$.

It is now sufficient to show that a set $J$ satisfying  $I\subseteq J\subseteq X$ is in $\I(M \vee N)$ if and only if it misses infinitely many elements $L'\subseteq L$.
Suppose that $J\in \I(M\vee N)$.
There are sets $J_M\in\I(M)$ and $J_N\in\I(N)$ such that $J=J_M\cup J_N$.
As $D := I\setminus J_N$ meets every circuit of $N$ in $I$ by independence of $J_N$, the set $D$ is infinite.
But $I\subseteq J$ and hence $D\subseteq J_M$.
Let $A$ be the set of all integers $n$ such that $Y_n\cap D\neq\emptyset$.
As $Y_n$ is finite for every $n\in\N$, the set $A$ must be infinite and so is $L' := \{l_n\ |\ n\in A\}$.
Since $J_M$ is independent in $M$ and any element of $L'$ forms a circuit of $M$ with some element of $J_M$, we have $J_M\cap L'=\emptyset$ and thus $J\cap L'=\emptyset$ as no independent set of $N$ meets~$L$.

Suppose that there is a sequence $i_1 < i_2 < \ldots$ such that $J$ is disjoint from $L'=\{l_{i_n}\ |\ n\in\N\}$.
We show that the superset $X\setminus L'$ of $J$ is in $\I(M\vee N)$.
By construction, for every $n\in\N$, the set $Y_{i_n}$ contains an elements $d_n$ of $C_n$.
Set $D:= \{d_n\ |\ n\in\N\}$.
Then $D$ meets every circuit of $N$ in $I$, so $J_N := I\setminus D$ is independent in $N$.
On the other hand, $D$ contains exactly one element of each $M_n$ with $n\in L'$.
So $J_M := (L\setminus L') \cup D\in\I(M)$ and therefore $X\setminus L' = J_M\cup J_N\in\I(M\vee N)$.
\end{proof}

It is not known wether or not the proposition remains true if we drop the requirement that there are only countable many circuits in $I$.

\section{Base packing in co-finitary matroids}\label{sec:cover-pack}

In this section, we prove \autoref{thm:pack}, which is a base packing theorem for co-finitary matroids. 

\begin{proof}[Proof of \autoref{thm:pack}]
As the `only if' direction is trivial, it remains to show the `if' direction. 
For a matroid $N$ and natural numbers $k,c$ put
\[
 \I[N,k,c]:=\{X\subseteq E({N})\mid \exists I_1,...,I_k \in \I(N)\text{ with } g_c(I_1,...,I_k)=X\},
\]
where $  g_c(I_1,...,I_k):=\{e:  |\{j: e\in I_j\}|\geq c\}$.
The matroid $M$ has $k$ disjoint spanning sets if and only if $M^*$ has $k$ independent sets such that every element of $E$ is in at least $k-1$ of those independent sets.
Put another way, $M$ has $k$ disjoint bases if and only if  
\begin{equation}
	\I[M^*,k,k-1]=\P(E).
\end{equation}

As $M^*$ is finitary, $\I[M^*,k,k-1]$ is finitary by an argument similar to that in the proof of \autoref{thm:top_union}; 
here one may define
\[
f: \P(E)^k \to \P(E); \\ f(A_1,...,A_k)= g_{k-1}(A_1,...,A_k),
\]
and repeat the above argument.

Thus, it suffices to show that every finite set $Y$ is in $\I[M^*,k,k-1]$.
To this end, it is sufficient to find $k$ independent sets of $M^*$ such that every element of $Y$ is in at least $k-1$ of those;
complements of which are $M$-spanning sets $S_1,...,S_k$ such that these are disjoint if restricted to $Y$.
To this end, we show that there are disjoint spanning sets $S_1',...,S_k'$ of $M.Y$ and set $S_i:=S_i'\cup (E-Y)$.
Since \autoref{thm:pack} is true for finite matroids~\cite{Oxley}, the sets $S_1',..., S_k'$ exist if and only if $|Z|\geq k \cdot \rank_{M.Y}(Y|Y-Z)$ for all $Z\subseteq Y$.
As $|Z|\geq k \cdot \rank(E|E-Z)$, by assumption, and as $\rank(E|E-Z)=  \rank_{M.Y}(Y|Y-Z)$~\cite[Lemma 3.13]{matroid_axioms}, the assertion follows.
\end{proof}

It might be worth noting that this proof easily extends to arbitrary finite families of co-finitary matroids.

Finally, we use \autoref{thm:finitary-union} (actually only the fact that (IM) is satisfied for unions of finitary matroids), to derive a base covering result for finitary matroids. The finite base covering theorem asserts that {\sl a finite matroid $M$ can be covered by $k$ bases if and only if $\rank(X) \geq |X| /k$ for every $X \subseteq E(M)$}~\cite{schrijverBook}. 

\begin{cor}\label{thm:cover}
A finitary matroid $M$ can be covered by $k$ independent sets if and only if $\rank_M(X)\geq |X|/k$ for every finite $X \subseteq E(M)$.  
\end{cor}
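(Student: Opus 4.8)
The plan is to deduce the statement from the finite base covering theorem by passing to the $k$-fold union matroid and exploiting that this union is finitary. Throughout write $M^{\vee k}$ for $M\vee\cdots\vee M$ with $k$ copies; as a set system $\I(M^{\vee k})=\{I_1\cup\cdots\cup I_k\mid I_1,\dots,I_k\in\I(M)\}$, independently of how the union is bracketed, and $M$ is coverable by $k$ independent sets precisely when $E(M)\in\I(M^{\vee k})$.

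The \emph{only if} direction is elementary and uses no union theory: if $E(M)=I_1\cup\cdots\cup I_k$ with each $I_j\in\I(M)$, then for any finite $X\subseteq E(M)$ the set $I_j\cap X$ is an independent subset of $X$, so $|X|\le\sum_{j=1}^k|I_j\cap X|\le k\cdot\rank_M(X)$.

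For the \emph{if} direction, applying \autoref{thm:finitary-union} repeatedly --- with $M^{\vee k}=M^{\vee(k-1)}\vee M$ and $M^{\vee(k-1)}$ again a finitary matroid by induction --- shows that $M^{\vee k}$ is a finitary matroid, so $\I(M^{\vee k})$ is a finitary set system. Hence, if $E(M)$ is infinite, $E(M)\in\I(M^{\vee k})$ holds if and only if every finite $X\subseteq E(M)$ lies in $\I(M^{\vee k})$ (and if $E(M)$ is finite this reduction is trivial). Now for finite $X$, membership $X\in\I(M^{\vee k})$ says $X=I_1\cup\cdots\cup I_k$ with $I_j\in\I(M)$; since then $I_j\subseteq X$, this is the same as saying the finite restriction $M|X$ can be covered by $k$ independent sets, equivalently by $k$ bases (independent sets of a finite matroid extend to bases, and enlarging the parts preserves the union). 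By the finite base covering theorem this happens iff $\rank_{M|X}(Y)\ge|Y|/k$ for all $Y\subseteq X$, and $\rank_{M|X}(Y)=\rank_M(Y)$. Letting $X$ range over all finite subsets of $E(M)$, the resulting condition is precisely $\rank_M(Y)\ge|Y|/k$ for every finite $Y\subseteq E(M)$, as desired.

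There is no real obstacle here: every step is either an equivalence or a direct appeal to \autoref{thm:finitary-union} or to the finite base covering theorem. The only points that need a little care are the transition from ``$M^{\vee k}$ is a (finitary) matroid'' to ``independence of an infinite set is detected by its finite subsets'', and the routine check that covering a finite restriction by independent sets amounts to covering it by bases.
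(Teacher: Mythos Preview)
Your proof is correct and follows essentially the same route as the paper: form the $k$-fold union, invoke \autoref{thm:finitary-union} to see it is finitary, and reduce the question to the finite base covering theorem. The only cosmetic difference is that the paper argues the `if' direction by contradiction---a circuit of $M^{\vee k}$ is finite and yields a finite set $X$ violating $\rank_M(X)\ge|X|/k$---whereas you use the finitary property directly to test all finite subsets; the content is the same.
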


This claim is false if $M$ is an infinite circuit, implying that this result is best possible in the sense that $M$ being finitary is necessary.

\begin{proof}
The `only if' implication is trivial.
Suppose then that each finite set $X \subseteq E(M)$ satisfies $\rank_M(X) \geq |X|/ k$ and put $N = \bigvee_{i=1}^k M$;
such is a finitary matroid by \autoref{thm:finitary-union}.
If $N$ is the free matroid, the assertion holds trivially. 
Suppose then that $N$ is not the free matroid and consequently contains a circuit $C$;
such is finite as $N$ is finitary.
Hence, $M|C$ cannot be covered by $k$ independent sets of $M|C$ so that by the finite matroid covering theorem~\cite[Theorem 12.3.12]{Oxley} there exists a finite set $X \subseteq C$ such that $\rank_{M|C}(X) < |X|/k$ which clearly implies $\rank_M(X) < |X|/k$;
a contradiction.    
\end{proof}

\bibliographystyle{plain}
\bibliography{literatur}

\end{document}